\documentclass{article}

\usepackage{amsmath,amsthm,amsfonts,a4,bezier}
\usepackage[numeric,initials,nobysame,abbrev]{amsrefs}
\usepackage{amssymb}
\usepackage{amscd}
\usepackage{graphpap}
\usepackage[pdftex]{color,graphicx,xcolor}
\usepackage{enumerate}
\usepackage{setspace}
\usepackage{comment}
\usepackage{mathtools}
\usepackage{relsize}
\usepackage{indentfirst}
\usepackage{verbatim,bbm}
\usepackage{extarrows}   
\usepackage{centernot}
\usepackage{tikz}
\usepackage{titling}
\usepackage{ulem}
\usepackage[colorlinks=true, allcolors=blue]{hyperref} 
\usepackage{tzplot}
\usepackage{authblk}
\usepackage{enumitem}

\setlength{\oddsidemargin}{-0.125in} \setlength{\topmargin}{-0.5in}
\setlength{\textwidth}{6.5in} \setlength{\textheight}{9in}
\setlength{\textheight}{9in} \setlength{\textwidth}{6.8in}
\setlength{\topmargin}{-40pt} \setlength{\oddsidemargin}{0pt}
\setlength{\evensidemargin}{0pt}
\setlength{\baselineskip}{1.0cm}

\newtheorem{theorem}{Theorem}

\newtheorem{definition}{Definition}

\newtheorem{proposition}{Proposition}
\newtheorem{lemma}{Lemma}

\newtheorem{remark}{Remark}
\newtheorem{Conjecture}{Conjecture}

\newcommand{\E}{\mathcal{E}}

\newcommand{\Z}{{\mathbb{Z}}}
\newcommand{\LL}{{\mathbb{L}}}
\newcommand{\N}{{\mathbb{N}}}

\newcommand{\x}{\mathbf{x}}
\newcommand{\y}{\mathbf{y}}
\newcommand{\z}{{\mathbf {z}}}
\newcommand{\eps}{\varepsilon}

\newcommand{\G}{{\mathcal{G}}}

\renewcommand{\leq}{\leqslant}
\renewcommand{\geq}{\geqslant}

\renewcommand{\P}{{\mathbb{P}}}
\renewcommand{\u}{\mathbf{u}}
\renewcommand{\v}{\mathbf{v}}
\renewcommand{\o}{{\omega}}
\renewcommand{\t}{{\mathbf {t}}}
\renewcommand{\u}{{\mathbf {u}}}

\setlength{\affilsep}{2cm}

\begin{document}

\title{\bf{Truncated long-range percolation of words on the square lattice}}
\author[1]{Pablo A. Gomes}
\author[2]{Otávio Lima}
\author[3]{Roger W. C. Silva}
\affil[1]{\small Universidade de São Paulo, Instituto de Matemática e Estatística, Rua do Matão 1010, São Paulo, Brazil
\texttt{pagomes@usp.br}}
\affil[2]{\small Universidade Federal de Minas Gerais, Departamento de Estatística,  Av. Antônio Carlos 6627, Belo Horizonte, Brazil \texttt{otaviooasl@ufmg.br}}
\affil[3]{\small Universidade Federal de Minas Gerais, Departamento de Estatística,  Av. Antônio Carlos 6627, Belo Horizonte, Brazil \texttt{rogerwcs@est.ufmg.br}}

\date{}

\maketitle

\begin{abstract}

We study mixed long-range percolation on the square lattice. Each vertical edge of unit length is independently open with probability $\varepsilon$, and each horizontal edge of length $i$ is independently open with probability $p_i$. Also, each vertex is independently assigned a random variable taking values 1 and 0 with probabilities $p$ and $1-p$, respectively. We prove that, for a broad class of anisotropic long-range percolation models satisfying suitable regularity conditions on the sequence $p_i$, all words (semi-infinite binary sequences) are seen simultaneously from the origin with positive probability, even if all edges with length larger than some constant (depending on $\varepsilon$, $p$, and on the sequence $(p_i)_i$) are suppressed.

\medskip

\noindent{\it Keywords: percolation of words; long-range percolation; truncation} 

\noindent {\it AMS 1991 subject classification: 60K35; 82B41; 82B43} 
\end{abstract}

\onehalfspacing

\section{Introduction}\label{intro}

We study long-range percolation on the square lattice. More precisely, let $\G=(\Z^2,\E_V\cup \E_H)$, where $\E_V$ denotes the set of vertical edges of unit length, and $\E_H$ denotes the set of horizontal edges of all lengths. Specifically, we define
\begin{equation}\label{vertical}
\E_{V}=\left\{\{(x,y_1),(x,y_2)\}\in\Z^2\times\Z^2: |y_1-y_2|=1\right\},
\end{equation}
\begin{equation}\label{horizontal}
\E_{H,i}=\left\{\{(x_1,y),(x_2,y)\}\in\Z^2\times\Z^2: |x_1-x_2|=i\right\}.
\end{equation}
We then write $\E_H=\cup_{i=1}^{\infty}\E_{H,i}$ and $\E=\E_V\cup\E_H$. 

The percolation process is defined as follows: given a sequence $(p_i)_{i\in\N}$, $p_i\in [0,1]$, we consider the probability space $(\Omega,\mathcal{F},\P)$, where $\Omega=\{0,1\}^{\E}$ and $\mathcal{F}$ is the $\sigma$-field generated by the cylinder sets of $\Omega$. A configuration $\omega\in\{0,1\}^{\E}$ is assigned to the edges, and for each $e\in\E$, we write $\omega(e)$ for its value at $e$. If $\omega(e)=1$ ($\omega(e)=0$), we say $e$ is open (closed). The probability measure is given by 
\begin{equation*}\P=\prod_{\{ x,y \}\in \E}\mu_{\{ x,y \}},
\end{equation*}
where $\mu_{\{ x,y \}}(\omega(\{ x,y \})=1)=p_{||x-y||}$ is a Bernoulli law. 

Given $K\in\N$, we introduce the $K$-truncated sequence $(p_{K,i})_{i\in\N}$, defined by 
\begin{equation}\label{trun_seq}p_{K,i}=\left\{\begin{array}{ll}
p_i&\mbox{if}\quad i\leq K,\\
 0&\mbox{if}\quad i>K,
\end{array}\right.
\end{equation}
and the corresponding $K$-truncated percolation model $(\Omega,\mathcal{F},\P^K)$, where \begin{equation}\label{truncated}\P^K=\prod_{\{ x,y \}\in \E}\mu_{K,\{ x,y \}},
\end{equation}
with
$\mu_{K,\{ x,y \}}(\omega(\{ x,y \})=1)=p_{K,||x-y||}$.

We say that two vertices $u,v\in\Z^2$ are connected in the configuration $\omega$ if there exists a sequence of distinct vertices $(v_0=u,v_1,\dots,v_n=v)$ in $\Z^2$ such that $\{ v_i,v_{i+1}\}\in\mathcal{E}$ is open in $\omega$ for all $i=0,\dots,n-1$. We write $\{u\leftrightarrow v\}$ for the event that $u$ and $v$ are connected by an open path. The cluster of $v$ in the configuration $\omega$ is defined as $C_v(\omega)=\{u\in\Z^2: v\leftrightarrow u\mbox{ in $\omega$}\}$ (when $v$ is the origin we write simply $C=C(\omega)$ instead of $C_{0}(\omega)$). Percolation is said to occur at the vertex $v$ if $|C_v(\omega)|$ is infinite.

\subsection{Background and motivation}
The classical truncation question asks: if $\P(|C|=\infty)>0$, is it true that $\P^K(|C|=\infty)>0$ for some large $K$? This problem was first considered in \cite{MS}, where the authors provided affirmative answers under the assumption that the sequence $(p_i)_{i\in\N}$ decays exponentially.

A simpler (yet still open) problem is the following: assuming the sum of the $p_i$ diverges, does the truncation question on the square lattice have an affirmative answer? The first significant progress in this direction was made in \cite{SSV}, where the authors analyzed a model with $p_{V}=\varepsilon>0$ and $p_{H,i}=p_i$ for $i\geq 1$. They show that if $p_i\geq (i\log i)^{-1}$ for all sufficiently large $i$, then the truncation question has an affirmative answer. Their argument is based on a renormalization scheme that leads to a dependent percolation process dominating an anisotropic nearest-neighbor Bernoulli percolation model. In this auxiliary model, the probabilities $p_V$ and $p_H$ of vertical and horizontal edges being open satisfy $p_V + p_H > 1$, which guarantees almost sure percolation (see \cite{K}).

Removing the regularity assumption on the sequence $(p_i)_{i\in\N}$, in \cite{FLS} the authors consider a percolation model on $\Z^d$, $d\geq 2$, where all edges of length $i$ parallel to some coordinate axis are independently open with probability $p_i$. They prove that the truncation question has an affirmative answer under the condition that $\limsup p_i>0$. Their proof involves constructing an isomorphism between a subgraph of $\G$ and an $s$-dimensional slab of $\Z^2$ for some $s$ large, followed by an application of the classical result of Grimmett and Marstrand \cite{GM}. For related results on other graphs under the same hypothesis, see \cite{AHLV}.

The truncation question has also been explored under various assumptions on the $p_i$'s; see \cite{LS} and the more recent work \cite{CL}. For higher dimensions ($d\geq 3$), affirmative answers have been provided in \cite{ELV}, \cite{FL}, and more recently in \cite{B}.

The main motivation of this work is to investigate the truncation question in a broader setting, namely percolation of words, which we introduce below.

\subsection{Percolation of words}
The problem of percolation of words was introduced in a pioneering paper by I. Benjamini and H. Kesten \cite{BK} and is formulated as follows: consider a graph $G=(V,E)$ with a countably infinite vertex set $V$. Each vertex $v\in V$ is independently assigned a random variable $\eta(v)$, which takes the value 1 with probability $p$ and 0 with probability $1-p$. This induces the probability space $(H,\mathcal{H},\P_p)$, where $H=\{0,1\}^{V}$, $\mathcal{H}$ is the $\sigma$-field generated by the cylinder sets of $H$, and $\P_p$ is the product measure on $\mathcal{H}$. A typical element of $H$ is denoted by $\eta$, and $\eta(v)$ denotes the state of vertex $v$ in the configuration $\eta$. 

In percolation of words, one is interested in the existence or nonexistence of a self-avoiding sequence of distinct vertices $(v,v_1,v_2,\dots)$ with $\eta(v_i)=\xi_i$, $i\geq 1$, for any prescribed sequence $\xi=\{\xi_i\}_{i\geq 1}\in\{0,1\}^{\N}$. In this case, we say that $\xi$ is seen from vertex $v$ (note that the state of $v$ plays no role here). The main goal is to understand when the collection of sequences $\{\xi_i\}_{i\geq 1}$ that are seen is large. Formally, write
$$\Xi=\{0,1\}^{\mathbb{N}}$$ 
and let $\xi=\{\xi_n\}_{n\in\mathbb{N}}$ be an element of $\Xi$, which we refer to as a \textbf{word}.  For $v\in V$, define the sets
\begin{equation}\label{set_words}W_v=W_v(\eta)=\{\xi\in \Xi: \xi \mbox{ is seen from $v$ in }\eta\},
\end{equation}
$$W_{\infty}=W_{\infty}(\eta)=\bigcup_{v\in V}W_v(\eta).$$ 
Here, $W_{\infty}$ is the set of words seen from some vertex in $G$. Clearly, the largest these sets can be is $\Xi$. 

The study of percolation of words is particularly challenging due to its lack of monotonicity. Unlike Bernoulli percolation (where the focus is on events such as "an infinite connected component of 1’s exists"), the events considered in percolation of words are generally neither increasing nor decreasing. 

Denote by $\LL^d$ the usual hypercubic lattice with nearest neighbors. In \cite{BK}, the authors investigate the problem of percolation of words on $\LL^d_+$
and prove that the event $\{W_{\infty}=\Xi\}$ occurs almost surely when $p=1/2$ and $d\geq 10$. They also show that, when $p=1/2$ and $d\geq 40$, there is a vertex from which one sees all words almost surely, and (by ergodicity) there is a strictly positive probability that one sees all words from the origin.

In \cite{KSZ}, the authors show that \textbf{almost all} words (with respect to the product measure $\nu_{\lambda}:= ((1-\lambda)\delta_0+\lambda\delta_1)^{\otimes\N}$ on $\Xi$, $0<\lambda<1$) can be seen in critical site percolation on the triangular lattice. In a subsequent paper \cite{KSZ2}, the authors examine the problem of seeing all words in site percolation on the closed-packed graph of the square lattice, which is formed by adding diagonal edges to each face of $\LL^2$. They show that for every fixed $p\in(1-p_c(\LL^2),p_c(\LL^2))$, the event $\{\mbox{there exists $v$ such that }W_{v}=\Xi\}$ has probability 1. Here, $p_c(\LL^d)$ denotes the critical threshold for Bernoulli site percolation on $\LL^d$.

 In a recent work \cite{NTT}, the authors show that the event $\{W_{\infty}=\Xi\}$ occurs almost surely on $\LL^d$, for $d\geq 3$ and $p\in(p_c(\LL^d),1-p_c(\LL^d))$, answering the question posed in Open Problem 2 in \cite{BK}. 

 \subsection{Results}

 Previous works address percolation of words on fixed graphs with all edges present. Here, we consider a more general setting: percolation on a random graph, introducing two layers of randomness. First, a long-range bond configuration $\omega$ is sampled according to $\P^K$. Then, the occurrence of a word $\xi$ in $\omega$ is determined by the conditional measure $\P_p^{\omega}$. Formally, writing $\P_{p}^K$ for the probability measure on the space $(\Omega\times H,\sigma(\mathcal{F}\times\mathcal{H}))$, we have, for every measurable rectangle $R_1\times R_2$, 
\begin{equation}\label{law}
\P_{p}^K(R_1\times R_2)=\displaystyle\int_{R_1}\P_p^{\omega}(R_2)\mbox{d}\P^K(\omega),\,\,\,\,\,\,\,R_1\in\mathcal{F}, R_2\in\mathcal{H}.
\end{equation}

Regarding the $K$-truncated long-range model, we say the word $(\xi_1,\xi_2,\dots)\in\Xi$ \textbf{is seen from the vertex $v\in\Z^2$} in the configuration $\omega\times\eta$ if there is a sequence $(v= v_0,v_1,v_2,\dots)$, $v_i\in\Z^2$, such that
$v_i\neq v_j$ for all $i,j\geq 0$, $i\neq j$, $e_i=\{ v_{i-1},v_i\} \in \E$, $||v_{i-1}-v_i||\leq K$, $\eta(v_i)=\xi_i$, $\omega(e_i)=1$, $i\geq 1$. 

In \cite{GLS}, the authors study a model on the $d$-dimensional lattice, $d\geq 3$, with one-dimensional long-range connections. They show that if the sum of the $p_i$ diverges, then the event $\{W_0=\Xi\}$ has probability arbitrarily close to 1, provided $K$ is sufficiently large. For $d=2$, the analogous result under the same assumption on the sum of the $p_i$ remains open. Indeed, as previously noted, it is still unresolved whether percolation occurs when relying solely on the divergence of the sum of the $p_i$. 

In this work, we address the truncation question in the context of percolation of words on a random subgraph of $\G=(\Z^2,\E_V\cup \E_H)$. We will show (see Theorem \ref{teo1}) that, for sufficiently large $K$, depending on the connection probabilities, the event $\{W_0=\Xi\}$ occurs with positive probability for a broad class of anisotropic truncated long-range percolation models for which connection probabilities $p_i$ satisfy certain regularity conditions; see Section \ref{remarkfinal}. To streamline the presentation and avoid unnecessary complications, we focus on the particular case where 
\begin{equation}\label{log}
p_i\geq (i\log i)^{-1},
\end{equation}
for all $i$ sufficiently large. This choice not only simplifies the analysis but also allows us to establish a stronger result compared to \cite{SSV}, which shows that percolation occurs under the same hypotheses.

In what follows, we consider the model in \eqref{law} with $p_{V}=\varepsilon$ and $p_{H,i}=p_i$ for all $i \geq 1$. We denote the relevant measure by $\P^K_{p,\varepsilon}$. Our main result is the following.

\begin{theorem}\label{teo1} If $p_i\geq(i\log i)^{-1}$ for all sufficiently large $i$, then for all $p\in(0,1)$ and for all $\eps\in(0,1]$, there exists a constant $K=K(p,\eps,(p_i))\in\N$ such that $$\P_{p,\varepsilon}^K(W_{0} =  \Xi )>0.$$
\end{theorem}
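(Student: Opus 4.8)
The plan is to graft a label-reading mechanism onto the renormalization of \cite{SSV}, so that a single favourable bond-and-label environment reads every word simultaneously, and then to exhibit a positive-probability event on which this occurs. Throughout I fix $p\in(0,1)$ and $\varepsilon\in(0,1]$ and postpone the choice of the block scale $N$ and the truncation level $K$ to the end, taking them large in the order $N$ first, then $K$.

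I would begin by recalling the backbone of \cite{SSV}. Because $p_i\geq (i\log i)^{-1}$ and $\sum_i (i\log i)^{-1}=\infty$, the truncated sums $\sum_{i\leq K}p_i$ diverge as $K\to\infty$; this permits the construction, at scale $N$, of horizontal crossings assembled from open edges of length at most $K$ and spliced by the vertical $\varepsilon$-edges. The induced dependent process on the renormalized lattice stochastically dominates an anisotropic nearest-neighbour Bernoulli percolation with $p_V+p_H>1$, hence percolates almost surely by the criterion on page~$54$ of \cite{K}. I would strengthen the estimates so that a renormalized site is open with probability as close to $1$ as desired for $N,K$ large; this furnishes, with positive probability, an infinite cluster of blocks containing the block of the origin, and is the sole source of long-range connectivity.

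The new ingredient uses a feature peculiar to long-range bonds: an open edge of length $j\leq K$ advances a path by $j$ units while visiting only its endpoint, so the path can be \emph{steered} one vertex at a time. From any vertex the number of open horizontal edges of length at most $K$ has mean $\sum_{i\leq K}p_i\to\infty$, and their endpoints carry i.i.d.\ labels; consequently the probability that some open edge reaches a $0$-labelled vertex and some other reaches a $1$-labelled vertex is $1-(\log K)^{-\Theta(1)}$, so for $K$ large one may advance while reading whichever symbol is required. I would call a block \emph{good} when, beyond the crossing and splicing of \cite{SSV}, this two-colour property holds at every vertex that a word-reading path could visit while sweeping across the block. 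Since such a sweep meets at most $2^{O(N)}$ vertices, a union bound shows that a good block fails with probability at most $2^{O(N)}(\log K)^{-\Theta(1)}$, which is small once $K$ is taken (doubly exponentially) large compared with $N$. Thus good blocks still dominate a supercritical percolation, and the infinite good cluster survives with positive probability.

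It remains to read an arbitrary word online. Let $A$ be the event that the block of the origin lies in the infinite cluster of good blocks, so that $\P_{p,\varepsilon}^{K}(A)>0$ for $N,K$ large. On $A$, and for any $\xi\in\Xi$, I would build a self-avoiding truncated open path from the origin realizing $\xi$ by induction: having read $\xi_1,\dots,\xi_n$ and arrived at a vertex of the infinite good cluster, I invoke the two-colour property to step along an open edge to a fresh good vertex carrying the symbol $\xi_{n+1}$, always advancing into blocks not yet visited so that self-avoidance and perpetual progress hold. Because the steering offers both colours at every vertex reached, the same environment accommodates every word at once, whence $A\subseteq\{W_0=\Xi\}$ and $\P_{p,\varepsilon}^{K}(W_0=\Xi)\geq \P_{p,\varepsilon}^{K}(A)>0$; note that this explicit construction never appeals to monotonicity, circumventing the well-known lack of monotonicity of word events. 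The step I expect to be the main obstacle is the fusion carried out in the definition of a good block: one must impose the two-colour steering requirement on top of the delicately engineered crossings of \cite{SSV} without degrading either the crossing probabilities or the near-independence between blocks that drives the stochastic domination, and one must check that steering \emph{composes} with the backbone---that entering a block through any admissible vertex still leaves both colours available all the way across and into the next good block, and that this can be done self-avoidingly and uniformly in the possibly small parameter $\varepsilon$. This quantitative reconciliation, rather than any isolated estimate, is where the difficulty concentrates.
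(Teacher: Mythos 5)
Your proposal contains a genuine gap, and it sits exactly where the real difficulty of this theorem lives: the entropy accounting behind your definition of a \emph{good} block does not close. You ask that the two-colour steering property hold at every vertex a word-reading path could visit in a block, bound the failure probability by $2^{O(N)}(\log K)^{-\Theta(1)}$, and make this small by taking $K$ doubly exponential in $N$. But these two requirements are incompatible with the block structure that the rest of your argument needs. If $K$ is doubly exponential in $N$, then the steering edges (length up to $K$) reach $\sim K/N$ blocks away: the two-colour property at a single vertex then depends on labels and bonds in enormously many distant blocks, so the good-block events have unbounded dependency range and no longer stochastically dominate a supercritical nearest-neighbour block percolation (a Liggett--Schonmann--Stacey type domination would require the failure probability, which is at best $2^{-\Theta(N)}$, to beat a function of the dependency range $K/N$, which is doubly exponentially small); moreover a steering step may land in a far-away, already-visited or unexplored block, so your self-avoidance and ``advance into fresh blocks'' induction breaks. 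Conversely, if you keep $K$ comparable to the block scale $N$ so that good blocks are local events (this is what the paper does: it takes $K=2n$ with blocks of length $n$), then each vertex has on average only $\sum_{i\leq K}p_i\approx\log\log N$ open truncated edges, the per-vertex two-colour failure probability is only polynomially small in $1/\log N$, and the expected number of failing vertices per block, of order $N(\log N)^{-c}$, diverges: the union bound fails at every scale. There is also a conditioning problem you gloss over: the steering options at a vertex must survive the consumption of labelled vertices by the (adversarial, word-dependent) history of the path, so the relevant failure event is not a per-vertex event at all but one indexed by histories, whose entropy is exactly the $2^{O(N)}$ you cannot afford.

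The paper's proof is built precisely to circumvent this obstruction, and it does so by \emph{not} requiring word-reading ability everywhere. It runs a growth algorithm inside each block that adaptively collects only $\alpha(n)=\lfloor C\log n\rfloor$ vertices, uses $L$-fold redundancy at each step (conditions C1--C3) so that a single step succeeds with probability close to $1$ conditionally on any previously consumed set of polylogarithmic size, and accepts that a block is good only with probability $1-\delta$, not $1-N^{-\mathrm{large}}$. The entropy of words is then beaten by a different mechanism: the renormalized lattice is cut into slices $T_\ell$ of exponentially growing size, the events for a word $\xi$ in slice $T_\ell$ depend on only $s(\ell)=O(4^{\ell}\alpha(n))$ letters, and each slice is re-explored up to $h=\alpha(n)$ times on disjoint reserved seed sets, so that the failure probability per finite word is $a^{4^{\ell}h}$ while the number of words is $2^{s(\ell)}$; choosing $a<4^{-4}$ wins the race $2^{s(\ell)}a^{4^{\ell}h}\to 0$. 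A separate deterministic lemma (Lemma \ref{lemmaconf}) seeds the construction near the origin, and the comparison with oriented percolation (Proposition \ref{aux2}, after \cite{GLS} and \cite{NTT}) converts good-block estimates into the product bound. What you flagged as a ``quantitative reconciliation'' with the crossings of \cite{SSV} is in fact the central structural obstacle, and repairing your argument would require importing essentially all of this machinery.
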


Consider Bernoulli site percolation on $\G$ with parameter $p\in(0,1)$. Suppose the sequence $(p_i)_{i\geq 1}$ is such that the $K$-truncated model percolates at the origin with positive probability. With coexistence of infinite clusters of 0s and 1s, it is reasonable to expect that all words can be seen, possibly with a suitably larger truncation constant. Motivated by this observation, we propose the following conjecture.

\begin{Conjecture}\label{conj} Consider a supercritical mixed long-range percolation model on $\G$ with $p_{V}=\varepsilon$ and $p_{H,i}=p_i$ for all $i \geq 1$. If the $K$-truncated process percolates for some $K\in\N$, then there exists $K_1\in\N$ such that all words can be seen from the origin with positive probability in the $K_1$-truncated model. 
\end{Conjecture}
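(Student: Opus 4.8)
The plan is to deduce the word statement from the percolation hypothesis by constructing, with positive probability, an infinite \emph{reading ladder} rooted at the origin inside the truncated cluster. First note a monotonicity: enlarging the truncation only opens more edges, so the $K_1$-truncated bond law stochastically dominates the $K$-truncated one for every $K_1\geq K$, and we may take $K_1$ as large as convenient without losing the percolation hypothesis (letting $K_1\to\infty$ recovers the full supercritical model). By a reading ladder I mean a chain of rungs $R_1,R_2,\dots$, where each rung $R_j=\{a_j,b_j\}$ consists of two cluster-vertices with $\eta(a_j)=0$ and $\eta(b_j)=1$, consecutive rungs are joined by open edges of length at most $K_1$ in all four pairings, and the origin is joined to $R_1$. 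Given such a ladder, \emph{every} word $\xi$ is read from the origin by the self-avoiding single-edge-step path that selects at rung $j$ the vertex whose label equals $\xi_j$: the required transition edge is always open by the full inter-rung connectivity. Crucially this object is \emph{linear}, so no exponential branching is required, which is what makes a two-dimensional construction with moderate range feasible.

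Second, I would robustify the cluster by renormalisation. Since the $K$-truncated process is a supercritical finite-range percolation on $\Z^2$, a Grimmett--Marstrand block argument---of the type used through the slab construction of \cite{FLS} and the renormalisation of \cite{SSV}, and resting ultimately on \cite{GM}---produces, for any $\delta>0$ and all large block side $L$, a coarse lattice whose \emph{good blocks} (each containing a local crossing cluster of density at least some $c>0$ joined to those of its neighbours) occur with probability at least $1-\delta$ and dominate a highly supercritical percolation. When $p$ lies in the coexistence regime, the same renormalisation applied to the $1$-labelled and to the $0$-labelled sub-clusters shows that both an infinite open path of $1$'s and an infinite open path of $0$'s run through the coarse infinite cluster, exactly as anticipated in the discussion preceding Conjecture \ref{conj} and in the spirit of \cite{NTT}.

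Third comes the assembly of the ladder. Within a good block the local cluster has at least $cL^2$ vertices carrying i.i.d.\ Bernoulli$(p)$ labels, so both label classes are abundant; using the freedom to take $K_1$ large I would link a nearby $0$-vertex and $1$-vertex of the cluster into a rung and connect consecutive rungs by the four required short edges, declaring a block a \emph{reading block} when such a ladder segment crosses it between designated ports. A concentration estimate (many independent candidate rungs, each good with probability bounded below once $L$ and $K_1$ are large) makes the reading-block event have probability at least $1-\delta'$, and---being a local, increasing function of the bond configuration---it chains through the renormalisation. Following the coarse infinite cluster and concatenating the segment ladders yields an infinite reading ladder from the origin; since the coarse structure survives with positive probability, $\P_{p,\eps}^{K_1}(W_0=\Xi)>0$ follows, for all words at once.

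The main obstacle is producing the rungs, with their full inter-rung connectivity, under the \emph{microscopic} label constraint---every vertex after the origin must carry the prescribed symbol, so there are no free intermediate vertices---and doing so uniformly enough to chain over all uncountably many words simultaneously from the single source origin (the $W_0=\Xi$, rather than $W_\infty=\Xi$, strength that forced large dimension in \cite{BK}). Non-monotonicity of word events rules out a direct stochastic-domination shortcut, so the reading-block event must be engineered by hand to be local and increasing. Most seriously, the hypothesis supplies only bare percolation and hence possibly bounded local degrees (when $\sum_i p_i<\infty$); yet reading the extreme words $000\cdots$ and $111\cdots$ forces monochromatic infinite open paths, so one genuinely needs the coexistence of percolating $0$- and $1$-sub-clusters and must link them into rungs. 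Turning this into a quantitative statement---that a positive-density supercritical finite-range cluster with i.i.d.\ labels contains, with high probability per block, rung-ladders realising both labels with full short-range cross-connectivity---is the crux, and is precisely the step where the long-range richness (large $K_1$) must supply the cross-links that two dimensions alone do not provide.
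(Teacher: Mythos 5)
First, a point of order: the statement you were asked about is stated in the paper as Conjecture~\ref{conj} and is \emph{not proved} there -- Theorems~\ref{teo1} and~\ref{teo2} establish the word conclusion only under the quantitative hypotheses $p_i\geq (i\log i)^{-1}$ or $\limsup p_i>0$, precisely because bare supercriticality is not enough for the authors' methods. So your attempt has to stand entirely on its own, and it does not: its central step is asserted rather than proved. The fatal gap is in your second and third paragraphs, where you claim that renormalization of the supercritical finite-range process, together with ``the freedom to take $K_1$ large,'' yields percolating $0$- and $1$-labelled sub-clusters and reading blocks of probability $1-\delta'$. Taking $K_1$ large is vacuous in general: if $p_i=0$ for all $i>K$ (or $(p_i)$ is summable), enlarging the truncation adds essentially nothing to the graph, the infinite cluster can be arbitrarily sparse, and for $p$ near $0$ or $1$ the site process at density $\min\{p,1-p\}$ restricted to that sparse cluster has no reason to be supercritical -- yet reading the words $000\cdots$ and $111\cdots$ requires exactly that. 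Your appeal to a ``coexistence regime'' smuggles in an unproved hypothesis (note the conjecture itself asserts the conclusion for the model as given), and coexistence of labelled sub-clusters inside the truncated cluster is essentially the content of the conjecture, not a consequence of a \cite{GM}-type block argument (which, incidentally, is a $d\geq 3$ slab statement, not the natural tool for two-dimensional finite-range renormalization). You concede in your final paragraph that this is ``the crux''; flagging the crux does not discharge it.

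One genuinely good feature of your proposal deserves mention: the reading ladder is word-independent, so if an infinite ladder from the origin exists, \emph{all} of $\Xi$ is seen simultaneously and no union bound over words is needed -- a real structural economy compared with the paper's per-word explorations, whose entropy must be beaten by bounds of the form $a^{4^{\ell}}$ against $2^{s(\ell)}$ words (Sections~2.2--2.3, following \cite{NTT,GLS}). But this only sharpens where your argument fails: a rung-to-rung step requires a labelled $K_{2,2}$ (a fully open $4$-cycle with one $0$ and one $1$ on each diagonal pair), and under the bare hypothesis its success probability is merely bounded below by some fixed $\delta_0<1$, not made close to $1$ -- neither $L$ nor $K_1$ can boost it when $(p_i)$ has bounded support, since the number of disjoint candidate rungs between designated ports is not controlled from below by supercriticality of the bond process alone. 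A chain of such steps dies exponentially unless the induced ladder-percolation process is itself supercritical, which is an independent assumption, not a deduction. So the proposal is a heuristic outline, not a proof; the statement remains open, and your sketch would at best recover weakened forms of Theorems~\ref{teo1} and~\ref{teo2} after importing exactly the quantitative hypotheses on $(p_i)$ (and the growth-algorithm machinery of Lemma~\ref{good_vert}) that the conjecture is meant to dispense with.
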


As in Theorem~\ref{teo1}, the following result also strengthens Conjecture \ref{conj}. As noted in \cite{FLS}, the truncation question has an affirmative answer on $\Z^2$ under the condition $\limsup p_i>0$. We establish a stronger result under the same conditions on $p_i$, showing that all words can be seen from the origin with positive probability on $\G$ when $p_{V}=\varepsilon$ and $p_{H,i}=p_i$ for all $i \geq 1$. 

\begin{theorem}\label{teo2} If $\limsup{p_i}>0$, then for all $p\in(0,1)$, $\varepsilon\in(0,1]$, and $\alpha>0$, there exists $K=K(p,\varepsilon,(p_i),\alpha)\in\N$ large enough such that $$\P_{p,\varepsilon}^K(W_0=\Xi)>1-\alpha.$$
\end{theorem}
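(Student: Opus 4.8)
The plan is to raise the effective dimension of the model by isomorphically embedding a thick slab of a high-dimensional lattice into $\G$, and then to run a renormalization that is robust enough to read every word simultaneously. Since $\limsup p_i>0$, I first fix $\delta>0$ with $p_i\geq\delta$ for infinitely many $i$; call such lengths \emph{good}. Reusing the isomorphism of \cite{FLS}, I would choose good lengths $N_1<N_2<\cdots<N_s$ growing fast enough that the additive map $\phi(c_1,\dots,c_{s-1},a,b)=\bigl(\sum_{k=1}^{s-1}c_kN_k+aN_s,\,b\bigr)$ is injective on the slab $T=\{0,\dots,m\}^{s-1}\times\Z\times\Z\subset\Z^{s+1}$, and I set $K=N_s$. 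Under $\phi$, a nearest-neighbor edge of $T$ in a finite direction $c_k$ maps to a horizontal edge of $\G$ of length $N_k$ (open with probability $p_{N_k}\geq\delta$), the infinite horizontal direction $a$ to a horizontal edge of length $N_s$, and the direction $b$ to a vertical unit edge (probability $\varepsilon$); all have length $\leq K$. Injectivity of $\phi$ guarantees that the pulled-back bonds and the site labels inherited from $\eta$ are independent with the correct marginals, and $\phi(0)=0$. Consequently $\P^K_{p,\varepsilon}(W_0=\Xi)$ on $\G$ is at least the probability that all words are seen from the origin in the induced nearest-neighbor model on $T$, so it suffices to make the latter exceed $1-\alpha$.

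Choosing $s>1/(2\delta)$ renders the associated anisotropic Bernoulli bond percolation on the full space $\Z^{s+1}$ strictly supercritical (one may even discard the $\varepsilon$-direction). By the Grimmett--Marstrand theorem \cite{GM}, for thickness $m$ large the same holds inside the slab $T$, with block-crossing probabilities as close to one as we wish. This is precisely the backbone that \cite{FLS} use to settle the truncation question under $\limsup p_i>0$; here it only supplies the connected skeleton on which words are to be read.

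The genuinely new and hardest part is reading \emph{every} word. The definition in \ref{teo2} forces $\eta(v_i)=\xi_i$ at \emph{every} step, so the path must match the prescribed label letter by letter — this is the non-monotone core of the problem. The mechanism I would exploit is that a vertex $v\in T$ has $\Theta(s)$ slab-neighbors, each open with probability $\geq\delta$ (or $\varepsilon$) and carrying label $1$ (resp.\ $0$) with probability $p$ (resp.\ $1-p$), all independently; hence the probability that some as-yet-unvisited neighbor is simultaneously open and carries a prescribed letter $\ell\in\{0,1\}$ is at least $1-(1-\delta\min(p,1-p))^{\Theta(s)}$, which tends to $1$ as $s\to\infty$. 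Thus each microscopic step can follow either letter with probability close to one. I would then set up a renormalization in the two infinite directions, in the spirit of \cite{NTT} and \cite{GLS}: a block is declared good when, from its entry region, the open cluster permits the word-reading to be advanced for \emph{both} possible next letters while exiting toward the successor blocks, self-avoidance being automatic from the disjointness of blocks. Good blocks dominate a supercritical oriented percolation whose density can be driven to one by enlarging $s$, $m$, and the block scale; on the event that the origin block is good and lies in the infinite oriented cluster — an event of probability $>1-\alpha$ for suitable parameters — the resulting skeleton deterministically yields, for every $\xi\in\Xi$, a self-avoiding $K$-truncated path from the origin spelling $\xi$.

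The \textbf{main obstacle} is exactly this last step: converting per-step near-certain success into a single combinatorial structure that reads all (uncountably many) words at once, with self-avoidance, in the face of non-monotonicity. The oriented-percolation comparison must be arranged so that the surviving structure is \emph{complete}, i.e.\ supports a viable continuation for \emph{both} letters at every renormalized step, rather than merely guaranteeing that a single fixed word percolates; making the ``good block'' event local, both-letter-robust, and still of probability close to one is where the bulk of the technical work will lie. (As in Theorem \ref{teo1}, the constant $K$ will in fact also depend on the sequence $\{p_i\}$ through the locations of the good lengths $N_1,\dots,N_s$.)
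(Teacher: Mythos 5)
Your reduction step is sound, and it is genuinely different from the paper's: the paper never passes through a high-dimensional nearest-neighbor slab or Grimmett--Marstrand. Instead it uses the explicit isomorphism $u\mapsto(\lfloor u/K\rfloor,\,u\bmod K)$ to identify a subgraph $F_K$ of the truncated lattice with a \emph{three}-dimensional slab $\Z^2\times\{0,\dots,K-1\}$ that retains long-range edges (of probabilities $p_1,\dots,p_{K-1}$) in the thickness direction; the many independent chances you extract from $\Theta(s)$ coordinate directions are extracted there from the many heights $i\leq N$ with $p_i\geq\delta$, each contributing $\varepsilon p_i(\min\{p,1-p\})^2$ to the success probability of an oriented step that reads \emph{two} prescribed letters. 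Your map $\phi(c_1,\dots,c_{s-1},a,b)=(\sum_k c_kN_k+aN_s,\,b)$ with rapidly growing good lengths is injective and does pull back independent bonds and sites with the right marginals, so this part is a legitimate alternative; the appeal to \cite{GM} is not wrong, merely superfluous, since a connected skeleton carries no information about letters.

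The genuine gap is the step you flag yourself as the ``main obstacle'': the proposal contains no definition of the good-block event, no estimate of its probability, and no domination proof -- and this is where the entire content of the theorem lies, since for a fixed word your argument gives at most $\P^K_{p,\varepsilon}(\xi\in W_0)>1-\alpha$, and a union bound over the uncountable set $\Xi$ gives nothing. Two concrete obstructions must be resolved and are not. First, entropy: every vertex of a word-reading path must carry a letter, so a block of scale $R$ must supply, from \emph{every} possible entry vertex, open self-avoiding continuations for \emph{all} of the exponentially many (in $R$, indeed in the block volume, since paths may wander) letter sequences that could be demanded of it; its success probability can only approach $1$ through an entropy-versus-decay estimate (e.g.\ taking $s$ enormous relative to $R$), a tension the proposal never acknowledges. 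This is exactly what the paper's machinery is built to control: in Theorem 1 the exponentially growing slices $T_\ell$, the starting sets of size $4^{\ell-1}$, the $h$ repeated explorations and the balance $2^{s(\ell)}a^{4^\ell h}$; in Theorem 2 the coupling of Section 3 together with Lemmas 1 and 2 of \cite{GLS}, whose per-step success probability is uniform in the word. Second, dependence: your per-step bound $1-(1-\delta\min(p,1-p))^{\Theta(s)}$ is stated for ``as-yet-unvisited'' neighbors, but comparison with an independent supercritical oriented percolation requires a lower bound on the conditional probability given the whole exploration past; the paper arranges this explicitly (note, e.g., that it splits the height ranges $\{1,\dots,N\}$ and $\{N+1,\dots,N+M\}$ between the two oriented directions precisely ``to avoid dependence issues''). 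Until a local, both-letter-complete block event is defined, shown to have probability near $1$ despite the internal union over words, and shown to dominate oriented percolation conditionally on the past, the proof of the statement has not been given.
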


The remainder of the paper is organized as follows. Section~\ref{sec_2} contains the proof of Theorem~\ref{teo1}, which is structured into several parts: we first describe the renormalization setting and a certain exploration process on the vertices of the renormalized lattice. We then complete the proof by constructing a coupling between the process induced by the exploration and an independent, highly supercritical oriented percolation process. Sections~\ref{proof_lemma2}, \ref{proofoflemmas}, and \ref{proofofentropy} establish three key lemmas that support the arguments in Section~\ref{sec_2}. In Section~\ref{remarkfinal}, we discuss a generalization of Theorem~\ref{teo1}, and in Section~\ref{sec_3} we prove Theorem~\ref{teo2}.

\section{Proof of Theorem~\ref{teo1}}\label{sec_2}

\subsection{Proof overview}\label{proof_overview}

The proof of Theorem \ref{teo1} relies on dynamically constructing a renormalized process and coupling it with a highly supercritical oriented percolation model on $\LL^2_+$. This construction is such that the letters of a word are seen at the good vertices of the corresponding renormalized process. Following the approach in \cite{GLS}, the lattice $\LL^2_+$ is partitioned into slices that grow exponentially, with the occurrence of good events in one slice implying, with high probability, the occurrence of good events in the subsequent slice. The argument proceeds through a quantitative analysis focused on finite words, as in \cite{NTT}.

We remark that this strategy could yield a positive answer to the truncation question in percolation of words, provided that a suitable coupling can be constructed for each word. The main difficulty is that this typically requires many connections within each renormalized vertex, and hence many letters to be seen in each vertex. Without sufficient control, the number of finite words that must be considered in each slice becomes too large to obtain effective bounds.

It is worth mentioning that, in \cite{GLS}, a $d$-dimensional model with $d \geq 3$ is considered. The extra dimension allows the coupling to be built by connecting only two letters in each renormalized vertex, making the strategy feasible. Since we are working in a $2D$ setting, several technical difficulties arise, as will become clear throughout the paper.

The proof is organized into four main parts:

\noindent \textbf{Part I}. In Section \ref{renor}, we define the renormalized lattice, in which each vertex corresponds to a one-dimensional horizontal box of size $n$, and we introduce the notion of slices in this lattice. 

\noindent\textbf{Part II.} For each $\xi \in \Xi$, we build a coupling with oriented percolation on the slices of the renormalized lattice through an exploration process. To increase the probability of good events, we allow multiple explorations, which introduces many complications. We define the exploration process in Section \ref{exp_proc}.

\noindent \textbf{Part III}. For the $\ell$-th slice,  the exploration process can be restricted to the set $\{0,1\}^{s(\ell + 1)}$ of finite words of size $s(\ell + 1)$, for some function $s$ (see \eqref{sl}). Under suitable bounds for this exploration process, and by applying the union bound over $\{0,1\}^{s(\ell + 1)}$, we show that the probability that there exists $\xi \in \{0,1\}^{s(\ell + 1)}$ such that all corresponding explorations fail in the $\ell$-th slice is summable in $\ell \in \N$; see Section \ref{sec_final}.

\noindent \textbf{Part IV}. The final part of the proof establishes the estimates required in Part III. In Section \ref{proof_lemma2}, we construct the coupling with oriented percolation underlying the exploration procedure. In Section~\ref{proofoflemmas}, we build a deterministic finite configuration from which a suitable number of letters of every word can be seen starting from the origin, yielding a concatenation argument that connects the origin to the first explored slice (see Remark~\ref{weekversion}). Finally, in Section~\ref{proofofentropy}, we show that each exploration reveals only a controlled number of letters per slice (entropy control), a property crucial for making the union bounds effective.

In Section~\ref{sectionG_proc}, we define a growth algorithm within renormalized vertices for which, for sufficiently large $n$, each exploration yields a good vertex with arbitrarily high probability, independently of previous explorations and uniformly over $\xi\in\Xi$. This implies that each exploration stochastically dominates a highly supercritical oriented site percolation model, from which the required bounds follow.
  
The growth algorithm inside a box is inspired by the method of~\cite{MSV}, which was used to prove percolation, i.e., that the word $(1,1,\dots)$ is seen from the origin with positive probability. In our setting, the vertex labels ($0$ or $1$) are also inspected. At each step, we first check the edges and proceed to examine exactly $L$ vertices only if at least $L$ edges are open, where $L$ depends on $p$ and on the renormalized process. Thus, only $L$ letters are revealed per step, a key feature ensuring that the algorithm succeeds with high probability uniformly over all sequences.  Moreover, to obtain the bounds required in Part~III, multiple exploration processes may be considered, leading to a more intricate construction. By controlling the number of inspected vertices, we establish a stochastic dominance with respect to a simple random walk on~$\Z_+$ with arbitrarily large drift to the right.

\subsection{The renormalization}\label{renor}


The renormalized lattice $\mathcal{L}_n=\mathcal{L}_n(\Z^2_+)$ is defined as follows. For each $n\in\N$ and $\u=(u_1, u_2) \in \mathbb{Z}^2_+$, define the one-dimensional horizontal box 
\begin{equation}\label{renormalization}
S_{\u}^{n} = S_{(u_1, u_2)}^{n} \coloneq \{ (y_1, y_2) : y_1 \in [nu_1, n(u_1 +1)-1], y_2 = u_2\},
\end{equation}
which we call a \textbf{renormalized vertex}. Two renormalized vertices $S_{\u}^{n}$ and $S_{\v}^{n}$ are said to be adjacent if $\left \| S_{\u}^{n} - S_{\v}^{n} \right \| :=  \inf_{x\in S_{\u}^{n}, y\in S_{\v}^{n}} \left \| x - y \right \| = 1$. It follows that $\mathcal{L}_n(\Z^2_+)$ is isomorphic to $\mathbb{L}^2_+$, the nearest-neighbor square lattice restricted to the first quadrant.

We define the oriented boundary of a set $A\subset\Z^2_+$ as
$$\partial_e A:=\{\u\in A^c\colon  \exists~\v\in {A}\mbox{ such that } \u-\v=(1, 0) \mbox{ or } \u - \v = (0,1)\}.$$

With some abuse of notation, let $\u_1,\u_2,\dots$ be a fixed ordering of the vertices of $\mathcal{L}_n$. We describe an exploration process on $\mathcal{L}_n$ in which the fixed $\xi\in\Xi$ is seen. The renormalized lattice is partitioned into slices, which are explored sequentially, one at a time.

\begin{figure}[ht]
\centering
\begin{tikzpicture}[scale=0.25]
\fill[lightgray!25] (6,0) -- (20,0) -- (0,20)  -- (0,6) -- cycle;

\draw[line width=0.05cm,lightgray] (0,6) -- (6,0);
\draw[line width=0.05cm,gray] (0,0) -- (0,20);
\draw[line width=0.05cm,gray] (0,0) -- (20,0);
\draw[line width=0.02cm,dashed] (0,20) -- (5,15);
\draw[line width=0.02cm,dashed] (15,5) -- (20,0);
\draw[line width=0.05cm,black] (5,15) -- (15,5);

\draw[line width=0.02cm,dashed] (0,5) -- (1.25,3.75);
\draw[line width=0.05cm,black] (1.25,3.75) -- (3.75,1.25);
\draw[line width=0.02cm,dashed] (3.75,1.25) -- (5,0);

\draw (0,0)node[below] {$0$};
\draw (20,0)node[below] {$4^{\ell+1}-1$};
\draw (6.2,0)node[below] {$4^{\ell}$};

\draw (3,20.0)node[below] {\normalsize ${F}_{\ell +1}$};
\draw (11.5,12.0)node[below] {\normalsize $\overline{F}_{\ell +1}$};
\draw (1.5,3.0)node[below] {\normalsize $\overline{F}_{\ell}$};
\draw (7,7)node[below] {\large $T_{\ell}$};
\draw (-1,6.0)node[below] {\normalsize ${F}_{\ell}$};

\foreach \Point/\PointLabel in {(20,0)/, (6,0)/ }
        \draw[fill=black] \Point circle (0.15) node[above right] {$\PointLabel$};
\end{tikzpicture}
\caption{An illustration of the sets $T_\ell$ (gray area), $F_\ell$ (lowest diagonal line), and $\overline{F}_\ell$ (black continuous line).} 
\label{slice_schemme}
\end{figure}

For each $\ell\in \Z_+$, we define the $\ell$-th \textbf{slice} of $\mathcal{L}_n$ as
\begin{equation*}\label{slice}
T_{\ell}=\{\z = (z_1,z_2)\in\Z_+^2 : 4^{\ell}\leq z_1+z_2 < 4^{\ell+1}\}.
\end{equation*}
Also, write 
\begin{equation}\label{line}
F_{\ell}=\{\z = (z_1,z_2)\in\Z_+^2 : z_1+z_2=4^{\ell} - 1\},
\end{equation}
and let $\overline{F}_{\ell} \subset F_{\ell}$ be its middle segment, that is,
\begin{equation}\label{F2}
\overline{F}_{\ell} =\{\z = (z_1,z_2) \in F_{\ell}: 4^{\ell-1}\leq z_1 < 3\times4^{\ell-1}\};
\end{equation}
see Figure \ref{slice_schemme} for a sketch of these sets.

In our construction, each slice may be explored multiple times. In each exploration, we check whether a certain event occurs: if so, we move to the next slice; otherwise, we restart from a different set of vertices within the same slice. A key difficulty is to control the explorations so that the states of each vertex and each edge in a slice are checked at most once.

We introduce an auxiliary constant $C > 0$ such that each slice will be explored at most 
\begin{equation}\label{h}
    h = h(n) \coloneq \lceil C\log(n) \rceil
\end{equation}  
times. Later, we will require $n$ and $C$ to be large, depending on the model parameters.

\subsection{The exploration processes}\label{exp_proc}

The exploration processes are defined for each fixed word $\xi \in \Xi$, and we refer to the exploration of $\xi$ as the $\xi$-exploration. The idea is to construct paths through good renormalized vertices, guided by the letters of $\xi$. If the $\xi$-exploration succeeds in every slice of $\mathcal{L}_n$, then we will be able to show that $\xi \in W_0$.

Let us introduce some terminology. Given $\xi \in \Xi$, $K\in\N$, and $m \in \N$, a vertex $x \in \Z^2$ is said to be $(\xi,m)$-\textbf{special} in the configuration $\omega\times\eta$ if there is a sequence $(0= v_0,v_1,v_2,\dots, v_m = x)$, $v_i\in\Z^2$, such that
$v_i\neq v_j$ for all $0\leq i,j\leq m$, $i\neq j$, $e_i=\{ v_{i-1},v_i\} \in \E$, $||v_{i-1}-v_i||\leq K$, $\eta(v_i)=\xi_i$, $\omega(e_i)=1$, $i=1,\dots,m$. We say $x$ is $\xi$-\textbf{special} if $x$ is $(\xi,m)$-special for some $m \in \N$.

Under the truncated law, the underlying graph has bounded degree. Therefore, the statement that, for all $m \geq 1$, there exists a $(\xi,m)$-special vertex is equivalent to saying that $\xi$ is seen from the origin. This is also equivalent to the existence of infinitely many $\xi$-special vertices, that is,
\begin{equation*}
    \{\xi \in W_0\} = \bigcap_{m\geq 1}\Big[ \{x : x \textrm{ is $(\xi,m)$-special }\} \neq \emptyset \Big] =\Big\{\big\vert\{x : x \textrm{ is $\xi$-special}\} \big\vert = \infty \big\}.
\end{equation*}

We also introduce the increasing sequence $(s(\ell))_{\ell \in \N}$ defined by
\begin{equation}\label{sl}
    s(\ell) := 2\cdot 4^{\ell}h, 
\end{equation}
where $h$ is given in \eqref{h}. In our construction, only the first $s(\ell)$ letters of $\xi$ are relevant for the $\xi$-exploration in $T_{\ell}$ (note that both $s(\ell)$ and the number of possible explorations in $T_{\ell}$ depend on $h$). Hence, to bound the probability that there exists a word whose $h$ explorations in $T_{\ell}$ all fail, we apply a union bound with a factor $2^{s(\ell)}$, which we interpret as the entropy generated by the exploration processes.

\begin{definition}\label{seed_def} Given $\xi\in\Xi$ and $\ell\in\Z_+$, we say that a macroscopic set $\Theta_{\ell} \subset \overline{F}_{\ell}$ is a $(\xi,\ell)$-\textbf{seed} if the following conditions hold:
\begin{itemize}
    \item[S1.] $\vert \Theta_{\ell} \vert \geq 4^{\ell - 1}$;
    \item[S2.] for each $\mathbf{v}\in \Theta_{\ell}$, there exist  disjoint microscopic sets $H_{\mathbf{v}}^1, H_{\mathbf{v}}^2, \dots, H_{\mathbf{v}}^h\subset S_{\mathbf{v}}^n$, with $|H_{\mathbf{v}}^i|\geq h$, for all $i=1,\dots,h$;
    \item[S3.] for each $i=1,\dots,h$, the sequence $(t(x))_{x \in H_{\mathbf{v}}^i}$ is well defined, where each $x \in H_{\mathbf{v}}^i$ is $(\xi,t(x))$-{special} with $t(x) \leq s(\ell)$. 
     \end{itemize}
\end{definition}

Observe that each set $H_{\mathbf{v}}^i$ consists of $\xi$-special vertices at the microscopic scale within the renormalized vertex $S_{\mathbf{v}}^n$. Recall that, by definition, if a vertex $x$ is $(\xi,t(x))$-special, then the word $\xi$ is seen from the origin up to the letter $\xi_{t(x)}$ at $x$. 

We introduce a parameter $\ell_0\in\N$, which will be chosen large (see Lemma~\ref{lemmaentropy}), and start the explorations from the slice $T_{\ell_0}$. In the definition of a $(\xi,\ell_0)$-seed, we replace Condition S3 with the stronger requirement that
\begin{itemize}
    \item[S3'.] for each $i=1,\dots,h$, the sequence $(t(x))_{x \in H_{\mathbf{v}}^i}$ is well defined, where each $x \in H_{\mathbf{v}}^i$ is $(\xi,t(x))$-{special} with $t(x) \leq 2\cdot4^{\ell_0}$.
\end{itemize}

Consider the event
\begin{equation}\label{Bell0}
    \mathcal{B}_{\ell_0-1}(\xi) = \left\{\textrm{there exists a $(\xi,\ell_0)$-seed } \Theta_{\ell_0} \subset \overline{F}_{\ell_0} \right\}.
\end{equation}

Conditioned on the event $\mathcal{B}_{\ell_0-1}(\xi)$, the $\xi$-exploration is constructed inductively across the slices $T_\ell$, $\ell \geq \ell_0$. We explore $T_{\ell}$ only if the $\xi$-exploration in $T_{\ell-1}$ is successful, producing a $(\xi,\ell)$-seed $\Theta_{\ell}\subset \overline{F}_{\ell}$. Assuming that $\Theta_{\ell_0}, \dots, \Theta_{\ell}$ have been defined, we now describe the exploration within $T_{\ell}$.

Consider the sets $H_{\v}^i$, $\v \in \Theta_{\ell}$, $i=1,\dots,h$,  each consisting of $\xi$-special microscopic vertices. We begin with the first exploration starting from $H_{\v}^1$. If this exploration succeeds, we proceed to explore the slice $T_{\ell+1}$; otherwise, we initiate the second exploration in $T_{\ell}$, starting from $H_{\v}^2$, and so on.  We refer to the exploration originating from $H_{\v}^i$ as the $i$-th $\xi$-exploration, for $i = 1, \dots, h$.

Let us explain the 1st $\xi$-exploration in more detail. For each $\v\in \Theta_{\ell}$, set $G_{\mathbf{v}}^1  \coloneq H_{\mathbf{v}}^1$.  Let $(C_m^1,D_m^1)_{m\geq 0}$ be a sequence of ordered pairs of subsets of $\mathcal{L}_n$, initialized as $C_0^1=\Theta_{\ell}$ and $D_0^1=\emptyset$. After $m$ steps, the set $C_m^1$ represents the renormalized vertices through which the letters of $\xi$ are seen, and $D_m^1$ denotes the set of renormalized vertices that have already been analyzed. 

Assume $(C_r^1,D_r^1)$, $r=0,\dots,m$, has been defined and that, for each $\v\in C_m^1$, $G_{\mathbf{v}}^1$ and $(t(x))_{x \in G_{\mathbf{v}}^1}$ are well defined, with $|G_{\mathbf{v}}^1| \geq h$.  If $\partial_e C_m^1\cap (D^1_m)^c \cap T_{\ell} =\emptyset$, we stop the 1st $\xi$-exploration and declare $(C_{s}^1,D_{s}^1)=(C_m^1,D_m^1)$ for all $s\geq m$. If $\partial_e C_m^1\cap (D^1_m)^c\cap T_{\ell}\neq\emptyset$, let $\x$ be the earliest vertex according to some predefined fixed ordering in $\partial_e C_m^1\cap (D^1_m)^c \cap T_{\ell}$. 

Take $\y \in  C_m^1$ such that $\x-\y=(1, 0) \mbox{ or } \x - \y = (0,1)$. We first consider the case $\x-\y=(1, 0)$. Given $L\in\N$ (which will be chosen later),  we say $\x$ is \textbf{1-nice} if the following conditions hold: 
\begin{itemize}
\item[H1.] There are distinct vertices $x_1,\dots,x_L\in S_{\x}^n$ and (not necessarily distinct) $y(x_1),\dots,y(x_L) \in G_{\y}^1\subset S^n_{\y}$  such that $\omega(\{ y(x_k),x_k \})=1$;

\item[H2.] $\eta(x)=\xi_{t(y(x))+1}$ for some $x\in\{x_1,\dots,x_L\}$.
\end{itemize}

In the case $\x-\y=(0, 1)$,   we say $\x$ is \textbf{1-nice} if the following conditions hold: 
\begin{itemize}
\item[V1.] There are distinct vertices  $x_1,\dots,x_{L}\in S_{\x}^n 
$ and (not necessarily distinct) $y(x_1),\dots,y(x_L) \in G_{\y}^1\subset S^n_{\y}$ such that $\omega(\{ y(x_k),x_k - (0,1) \}) =1$ and $\omega(\{ x_k - (0,1) ,x_k \}) = 1$. 

\item[V2.] $\eta(x-(0,1))=\xi_{t(y(x))+1}$ and $\eta(x)=\xi_{t(y(x))+2}$, for some $x\in\{x_1,\dots,x_L\}$. 

\end{itemize}

\begin{remark} In Condition H1 (or V1), we first check the states of the edges connecting $G_{\y}^1\subset S^n_{\y}$ to $S_{\x}^n$; see Figures \ref{h1h2} and \ref{v1v2}. Then, in Condition H2 (or V2), we check the state $\eta(x)$ of only $L$ vertices in $S_{\x}^n$. This is crucial to control the size of unavailable vertices in $S_{\x}^n$ at each possible future exploration.
\end{remark}

\begin{remark} Condition~V1 requires two open edges to connect $y(x_j)$ to $x_j$. This is necessary because $G_{\y}^1$ is small compared to $n$. Thus, relying solely on edges of the form $\{x_k = y(x_k)+(0,1)\}$ with $y(x_k) \in G_{\y}^i$ does not suffice  to ensure the desired connection with high probability; see the discussion around \eqref{upsilon}.  
\end{remark}

\begin{figure}[ht]
\centering
\begin{tikzpicture}[scale = 0.85]

\draw[rounded corners, thick, dashed] (-5.8,-0.6) rectangle (2.4,0.4)
    node[midway,below,yshift=-20pt]{\textbf{$S_{\y}^n$}};

\draw[rounded corners, thick, dashed] (2.6,-0.6) rectangle (10.6,0.4)
    node[midway,below,yshift=-20pt]{\textbf{$S_{\x}^n$}};

\foreach \x in {-4.0,-3.0,...,0,2}{
  \fill (\x,0) circle (2pt);
}

\tznode(-2,0){ \scriptsize \textbf{$y(x_{3}) $}}[b];
\tznode(0,0){ \scriptsize \textbf{$y(x_{1} ) = y(x_4)$}}[b];
\tznode(2,0){ \scriptsize \textbf{$y(x_{2})$}}[b];

\foreach \x in {3,...,10}{
  \ifnum\x=5\relax
  \else
    \ifnum\x=7\relax
    \else
      \fill (\x,0) circle (2pt);
    \fi
  \fi
}

\fill (1,0) [mark=x, mark size=4pt] plot coordinates {(1,0)};
\fill (-5,0) [mark=x, mark size=4pt] plot coordinates {(-5,0)};
\tznode(3,0){ \scriptsize \textbf{$x_1$}}[b];
\tznode(4,0){ \scriptsize \textbf{$x_2$}}[b];

\fill (5,0) [mark=x, mark size=4pt] plot coordinates {(5,0)};

\fill (7,0) [mark=x, mark size=4pt] plot coordinates {(7,0)};
\tznode(6,0){ \scriptsize \textbf{$x_3$}}[b];
\tznode(9,0){ \scriptsize \textbf{$x_4$}}[b];


\tzto[ bend left = 25] (-2,0)  [above,near end] (6,0);
\tzto[ bend left = 25] (0,0)  [above,near end] (3,0);
\tzto[ bend left = 25] (2,0)  [above,near end] (4,0);
\tzto[ bend left = 25] (0,0)  [above,near end] (9,0);
\end{tikzpicture}

\caption{In Condition H1, we search within the box $S_{\mathbf{x}}^n$ for vertices $x_1, x_2, x_3, \dots, x_L$ such that each $x_i$ is connected by an open edge to some vertex in $G_{\mathbf{y}}^1\subset S^n_{\y}$. If $L$ such vertices are found in $S_{\mathbf{x}}^n$, then, in Condition H2, we check whether some vertex $x_i$, $i=1,\dots,L$, has state $\xi_{t(y(x)) + 1}$.}
\label{h1h2}
\end{figure}

For each $i = 1, \dots, h$ and $\mathbf{v} \in T_{\ell}$, let $R_{\mathbf{v}}^i$ denote the set of vertices $v \in S_{\mathbf{v}}^n$ whose states $\eta(v)$ are checked during the $i$-th $\xi$-exploration process. If Condition~H1 (or V1) does not hold, we write $R_{\mathbf{x}}^1 = \emptyset$. If Condition~H1 (or V1) holds but Condition~H2 (resp. V2) does not, we set $R_{\mathbf{x}}^1 = \{x_1, \dots, x_L \}$. 

\begin{figure}[ht]
\centering
     \begin{tikzpicture}[scale = 0.95]

\draw[rounded corners, thick, dashed] (-4.8,-0.6) rectangle (8.6,0.4)
    node[midway,below,yshift=-20pt]{\textbf{$S_{\y}^n$}};

\draw[rounded corners, thick, dashed] (-4.8,1.6) rectangle (8.6,2.6)
    node[midway,above,yshift=20pt, ]{\textbf{$S_{\x}^n$}};
\foreach \y in {-4.0,-3.0, -1, 0, 1, ...,6, 8}{
  \fill (\y,0) circle (2pt);
}  
\fill (-2,0) [mark=x, mark size=4pt] plot coordinates {(-2,0)};
\fill (7,0) [mark=x, mark size=4pt] plot coordinates {(7,0)};

\tznode(-1,0){ \scriptsize \textbf{$y(x_1)$}}[below];
\tznode(-4,0){ \scriptsize \textbf{$y(x_2)$}}[below];
\tznode(2,0){ \scriptsize \textbf{$y(x_3) = y(x_4)$}}[below];

\foreach \y in {-3.0,-2.0,...,1}{
  \fill (\y,2) circle (2pt);
}  
\foreach \y in {4,5,6,8}{
  \fill (\y,2) circle (2pt);
}  
 \fill (3,2) circle (2pt);

\tznode(-3,2){ \scriptsize \textbf{$x_1$}}[above];
\tznode(0,2){ \scriptsize \textbf{$x_2$}}[above];
\fill (2,2) [mark=x, mark size=4pt] plot coordinates {(2,2)};
\tznode(6,2){ \scriptsize \textbf{$x_4$}}[above];
\tznode(4,2){ \scriptsize \textbf{$x_3$}}[above];
\fill (-4,2) [mark=x, mark size=4pt] plot coordinates {(-4,2)};
\fill (7,2) [mark=x, mark size=4pt] plot coordinates {(7,2)};

\tzto[ bend left = 25] (2,0)  [below,near end] (6,0);
\tzto[ bend left = 25] (-4,0)  [below,near end] (0,0);
\tzto[ bend left = 25] (-3,0)  [below,near end] (-1,0);
\tzto[ bend left = 25] (2,0)  [below,near end] (4,0);
\tzto[ ] (-3,0)  [below,near end] (-3,2);
\tzto[ ] (0,0)   [below,near end] (0,2);
\tzto[ ] (4,0)   [below,near end] (4,2);
\tzto[ ] (6,0)   [below,near end] (6,2);

\end{tikzpicture}

\caption{An illustration of a vertical connection between the renormalized vertices $S^n_{\y}$ and $S^n_{\x}$.}
\label{v1v2}
\end{figure}

In Section~\ref{sectionG_proc}, we  formally define a \textbf{growth algorithm} within the box $S^n_{\u}$, $\u \in T_{\ell}$. Each execution of the algorithm examines only the vertices whose states are still unknown at that point.   
For every execution of the growth algorithm, $B_1$ will denote the initial set of unavailable vertices (those whose states were already examined). 
The algorithm starts from a vertex 
\begin{equation}\label{uzero}u_0 \in S^n_{\u}\end{equation}
such that, for some $t(u_0)$, $\xi$ is seen up to the letter $\xi_{t(u_0)}$ at $u_0$, in other words, $u_0$ is $(\xi,t(u_0))$-special. We distinguish between two cases:
\begin{itemize}
\item If $\u \notin F_{\ell+1}$, the algorithm is declared successful if it finds at least $h$ (as in \eqref{h}) $\xi$-special vertices $u \in S^n_{\u}$ for which there exists a sequence of vertices $ u_1, \dots, u_{k(u)} = u$, inside $S_{\u}^n$, satisfying $\o(\{u_{j-1}, u_{j}\}) = 1$ and $\eta(u_j) = \xi_{t(u_0)+j}$ for $j = 1, \dots, k(u)$. The set of such vertices is denoted by $A_T$, where $T$ is a stopping time for the algorithm; see \eqref{stopping}.
\item If $\u \in F_{\ell+1}$, a larger number of vertices is required to enable $h$ $\xi$-exploration processes in the next slice $T_{\ell+1}$; the algorithm is declared successful if $|A_T| \geq h^2$. 

\end{itemize}
 
When the algorithm succeeds, each $u \in A_T$ is assigned $t(u) = t(u_0)+k(u)$, with $k(u)$ defined as in the previous paragraph. By construction, each such $u$ is $(\xi,t(u))$-special. 

Returning to the definition of the 1st $\xi$-exploration process, if $\x$ is \textbf{1-nice}, we say that $\x$ is \textbf{1-good} if the following condition holds:
\begin{itemize}
\item[C3.] Let $x$ be the vertex obtained in Condition~H2 (or V2). The growth algorithm in $S_{\x}^n$, with initial set of unavailable vertices $B_1=\{x_1,\dots,x_L\}$, starting from $x$ with $t(x)=t(y(x))+1$, is successful.
\end{itemize}

Proceeding inductively, define $D_{m+1}^1 = D_m^1 \cup \{\x\}$. If $\x$ is \textbf{1-good}, set $C_{m+1}^1 = C_m^1 \cup \{\x\}$; otherwise, set  $C_{m+1}^1 = C_m^1$. In the former case, we also set $G^1_{\x} = A_T$. As observed above, the collection $(t(x))_{x \in G_{\x}^1}$ is well defined. Moreover,  $|G^1_{\x}| \geq h$ if $\x \notin F_{\ell+1}$, and $|G^1_{\x}| \geq h^2$ if $\x \in F_{\ell+1}$.

Also, after running the growth algorithm in $S_{\x}^n$,  if $\x$ is \textbf{1-nice},  the set $R_{\x}^1$ consists of all vertices checked during the growth algorithm, together with the vertices $\{x_1, \dots, x_L\}$ checked in Condition H2.

\begin{remark} Observe that when a renormalized vertex is declared good, it contains at least $h$ special vertices. This ensures that, with high probability, we can establish connections to special vertices in a neighboring box of the renormalized lattice.
\end{remark}

Define $$\Pi_{\ell}^1(\xi) = \bigcup_{m\geq 1} C_m^1,$$ the set of $1$-good vertices obtained during the 1st $\xi$-exploration in the slice $T_{\ell}$. We say that the 1st $\xi$-exploration in $T_{\ell}$ is \textbf{successful} if 
\begin{equation}\label{successful2}
    |\Gamma_{\ell}^1(\xi)| := \big| \Pi_{\ell}^1(\xi) \cap \overline{F}_{\ell+1} \big| \geq 4^{\ell}.
\end{equation}

If the 1st $\xi$-exploration in $T_{\ell}$ fails, we begin the 2nd $\xi$-exploration of vertices in $T_\ell$. More generally, for $i = 1, \dots, h-1$, if the $i$-th $\xi$-exploration in $T_{\ell}$ fails, we start the $(i+1)$-st $\xi$-exploration as we describe now.  We follow the same ordering of the vertices of $T_{\ell}$, starting the exploration from the sets $\{ G_{\v}^{i+1} := H_{\v}^{i+1} : \v\in \Theta_{\ell} \}$, recalling that for each $\v\in \Theta_{\ell}$, $t(v)$ is well defined for all $v\in G_{\mathbf{v}}^{i+1}\subset S^n_{\v}$.

The procedure is analogous to the 1st exploration. The only difference is that, for each $\x \in T_{\ell}$, vertices in the set $\cup_{j=1}^{i}R_{\x}^j$ (those checked in previous explorations) are no longer available. To simplify notation, we denote this set of unavailable vertices by
\begin{equation}\label{unchecked}
    \mathcal{U}_{\x}^{i+1} = \bigcup_{j=1}^{i}R_{\x}^j.
\end{equation}

Analogously, we inductively construct a sequence $(C_m^{i+1},D_m^{i+1})_{m\geq 0}$ of ordered pairs of subsets of $\mathcal{L}_n$,  with $(C_0^{i+1},D_0^{i+1})=(\Theta_{\ell},\emptyset)$. We only point out the differences here. 

In the definitions of $(i+1)$-nice and $(i+1)$-good vertices, conditions H1, V1 and C3, become:
\begin{itemize}
\item[H1'.] There exist distinct vertices $x_1,\dots,x_L\in S_{\x}^n\cap [\mathcal{U}_{\x}^{i+1}]^c$ and  $y(x_1),\dots,y(x_L) \in G_{\y}^{i+1}\subset S^n_{\y}$ (not necessarily distinct) such that $\omega(\{ y(x_k),x_k \})=1$;

\item[V1'.]  There exist distinct vertices  $x_1,\dots,x_{L}\in S_{\x}^n \cap (\mathcal{U}^{i+1}_{\x})^c
$ and  $y(x_1),\dots,y(x_L) \in G_{\y}^{i+1} \subset S^n_{\y}$ (not necessarily distinct) such that $\omega(\{ y(x_k),x_k - (0,1) \}) =1$ and $\omega(\{ x_k - (0,1) ,x_k \}) = 1$;

\item[C3'.] Let $x$ be the vertex obtained in Condition~H2 or V2. The growth algorithm in $S_{\x}^n$, with set of unavailable vertices $B_1=\{x_1,\dots,x_L\} \cup \mathcal{U}_{\x}^{i+1}$, starting from $x$ with $t(x)=t(y(x))+1$, is successful.
\end{itemize}

Since the exploration is restricted to $S_{\x}^n\cap[\mathcal{U}_{\x}^{i+1}]^c$, each vertex is checked at most once. Moreover, the sets of special vertices $G_{\x}^j$, $j=1,\dots,h$, are disjoint, ensuring that when verifying Condition H1' - which involves edges from $G_{\y}^i$ to $S_{\x}^n\cap\mathcal{U}_{\x}^{i+1}$ - no edge is checked more than once. In contrast, Condition V1' requires checking edges between vertices of $G_{\y}^i$ and other vertices in $S_{\y}^n$, which may have already been examined during the construction of $G_{\y}^i$. To avoid this, we assume without loss of generality (by monotonicity of $[i\log(i)]^{-1}$) that the growth algorithm uses only edges of odd length, while Condition V1' involves only even-length edges. Finally, since the growth algorithm in Condition C3' does not consider the set of unavailable vertices $B_1$, it follows — together with the last item of Remark~\ref{remark_1} — that each edge inside a renormalized box is checked in at most one execution of the growth algorithm.

If $\x$ is $(i+1)${-good}, we set $G^{(i+1)}_{\x} = A_T$ and note that $(t(x))_{x \in G_{\x}^{i+1}}$ is well defined. Moreover, either $|G^{i+1}_{\x}| \geq h$ if $\x \notin F_{\ell+1}$, or $|G^{i+1}_{\x}| \geq h^2$ if $\x \in F_{\ell+1}$.

Similarly, we define $$\Pi_{\ell}^{i+1}(\xi) = \bigcup_{m\geq 1} C_m^{i+1},$$ the set of ${(i+1)}$-good vertices. We say that the ${(i+1)}$-st $\xi$-exploration in $T_{\ell}$ is successful if 
\begin{equation}\label{successful3}
    |\Gamma_{\ell}^{i+1}(\xi)| := \big| \Pi_{\ell}^{i+1}(\xi) \cap \overline{F}_{\ell+1} \big| \geq 4^{\ell}.
\end{equation}


Having defined the renormalization and all the $h$ potential $\xi$-explorations in the slice $T_{\ell}$, write 
\begin{equation}\label{B1}
    \mathcal{B}_{\ell}^i(\xi) = \left\{|\Gamma_{\ell}^i(\xi)| \geq 4^{\ell}\right\}
\end{equation}
for the event where the $i$-th $\xi$-exploration is successful, and let 
\begin{equation}\label{B2}
\mathcal{B}_{\ell}(\xi) = \bigcup_{i=1}^h \mathcal{B}_{\ell}^i(\xi)
\end{equation} 
denote the event that at least one of the $h$ explorations is successful. Figure~\ref{eventobom} illustrates an exploration in $T_{\ell}$.

\begin{figure}[ht]
\centering
\begin{tikzpicture}[scale=0.25]
\fill[lightgray!25] (6,0) -- (20,0) -- (0,20)  -- (0,6) -- cycle;

\draw[line width=0.05cm,gray] (0,6) -- (6,0);
\draw[line width=0.05cm,gray] (0,0) -- (0,20);
\draw[line width=0.05cm,gray] (0,0) -- (20,0);
\draw[line width=0.02cm,dashed] (0,20) -- (5,15);
\draw[line width=0.02cm,dashed] (15,5) -- (20,0);
\draw[line width=0.05cm,darkgray] (5,15) -- (15,5);

\draw[line width=0.02cm,dashed] (0,5) -- (1.25,3.75);
\draw[line width=0.05cm,darkgray] (1.25,3.75) -- (3.75,1.25);
\draw[line width=0.02cm,dashed] (3.75,1.25) -- (5,0);

\draw (0,0)node[below] {$0$};
\draw (20,0)node[below] {$4^{\ell+1}-1$};
\draw (6.2,0)node[below] {$4^{\ell}$};

\draw (11.5,12.0)node[below] {\normalsize $\overline{F}_{\ell +1}$};
\draw (1.5,3.0)node[below] {\normalsize $\overline{F}_{\ell}$};

\foreach \Point/\PointLabel in {(20,0)/, (6,0)/ }
        \draw[fill=black] \Point circle (0.15) node[above right] {$\PointLabel$};

        \draw (2,3) -- (3,3) --(3,5);
\draw (6,5) -- (6,9) -- (11,9);
\draw (4,1) -- (4,2);
\draw (7,3) -- (7,6);
\draw (7,7) -- (7,8)-- (10,8);
\draw(10,9) (11,9);

\draw (2,3) -- (4,3) -- (4,7);
\draw (6,7) -- (13,7);
\draw (6,5) -- (6,14);
\draw (8,9) -- (8,12);
\draw (4,5) -- (6,5);
\draw (3,3) -- (10,3);
\draw (4,7) -- (4,16);
\draw (4,13) -- (5,13) -- (5,15);
\draw (2,3) -- (2,18);
\draw (4,1) -- (10,1) --(10,2) -- (18,2);
\draw (12,2) -- (12,3) -- (17,3);


\end{tikzpicture}
\caption{An illustration of an exploration in $T_{\ell}$ (gray area). The exploration is successful if the number of vertices in $\overline{F}_{\ell+1}$ that can be reached from some vertex in $\Theta_{\ell} \subset \overline{F}_{\ell}$ is at least $4^{\ell}$.} 
\label{eventobom}
\end{figure}

The $\xi$-explorations in $T_{\ell}$ are not independent; however, the probability of success is uniformly bounded (see Lemma~\ref{lemmaboundchances}). When at least one of the explorations is successful, a large collection of paths consisting of $\xi$-good renormalized vertices reaches $\overline{F}_{\ell+1}$. Conditioning on this event, we can proceed to explore the next slice.

Indeed, if for some $i = 1, \dots, h$, the $i$-th $\xi$-exploration in $T_{\ell}$ succeeds, we will be able to perform $h$ more attempts to obtain a successful $\xi$-exploration in the next slice $T_{\ell+1}$. By definition, the $i$-th $\xi$-exploration reaches at least $4^{\ell}$ renormalized vertices $\mathbf{z}\in \overline{F}_{\ell+1}$ (see \eqref{successful3}), each satisfying  $|G_{\mathbf{z}}^{i}|\geq h^2$. If the event $\mathcal{B}^i_{\ell}(\xi)$ occurs, we set
\begin{equation*}
  \Theta_{\ell + 1}  =  \Gamma^i_{\ell}(\xi).
\end{equation*}

For each $\mathbf{z} \in \Gamma^i_{\ell}(\xi)$, the set $G_{\mathbf{z}}^{i}$ is partitioned into sets $H_{\mathbf{z}}^{1},\dots,H_{\mathbf{z}}^{h}\subset G_{\mathbf{z}}^{i}$ with $|H_{\mathbf{z}}^{j}|>h$ for all $j=1,\dots,h$, from which we resume the $\xi$-exploration in $T_{\ell+1}$. As argued above, $\Gamma^i_{\ell}(\xi)$ satisfies conditions S1 and S2. To ensure that $\Gamma^i_{\ell}(\xi)$ is indeed a $(\xi,\ell+1)$-seed, one more property must be verified. 

Recall the sequence $(s(\ell))_{\ell \in \mathbb{N}}$  in \eqref{sl} and Condition S3 in the definition of a $(\xi,\ell)$-seed. Also, recall the auxiliary constant $C$ (see \eqref{h}). We have the following lemma, whose proof is given in Section~\ref{proofofentropy}.


\begin{lemma}\label{lemmaentropy}
    Let $C$ be as in \eqref{h}. Then,  for every sufficiently large $n$, there exists $\ell_0(C,n)$ such that if the $i$-th $\xi$-exploration in $T_{\ell}$ is executed, $i=1,\dots,h$, then  $(t(x))_{x \in G_{\z}^i}$ satisfies $$t(x)\leq s(\ell+1),$$ 
    for every $\xi \in \Xi$, $\ell \geq \ell_0$, and for all $\z \in \Gamma^i_{\ell}(\xi)$.
\end{lemma}

\subsection{Proof conclusion}\label{sec_final}
In this section, we present the final part of the proof of Theorem~\ref{teo1}. Throughout, we assume the validity of two lemmas whose proofs will be given in the subsequent sections.  

Recall the events defined in \eqref{Bell0}, \eqref{B1}, and \eqref{B2}. The following lemma states that, with appropriate choices of $L$, $C$, and $n$, the probability that the $i$-th $\xi$-exploration fails is uniformly bounded. 

\begin{lemma}\label{lemmaboundchances}
    Assume the hypothesis of Theorem~\ref{teo1}. For every $a > 0$, the constants $L$ and $C(L)$  can be chosen such that, for every sufficiently large $n$, the following holds. For every $\xi \in \Xi$ and $\ell_0 \in \N$, the $\xi$-explorations in the slices $T_{\ell_0}, T_{\ell_0+1}, \dots$ satisfy
    \begin{equation}\label{conclusion2}
    \P_{p,\eps}^{2n}\left( \left(\mathcal{B}_{\ell}^i(\xi)\right)^c \Bigg\vert \big[ \bigcap_{j< i} (\mathcal{B}_{\ell}^j(\xi))^c \big] \bigcap \mathcal{B}_{\ell-1}(\xi) \right) \leq a^{4^{\ell}}, 
\end{equation}
for all $i = 1, \dots, h$ and $\ell \geq \ell_0$.
\end{lemma}

In \eqref{conclusion2}, we adopt the convention that $[\cap_{j<1} (\mathcal{B}_{\ell}^j(\xi))^c ] \cap \mathcal{B}_{\ell-1}(\xi) = \mathcal{B}_{\ell-1}(\xi) $ .

Lemma~\ref{lemmaentropy} implies that the events $\mathcal{B}_{\ell}^i(\xi)$ and $\mathcal{B}_{\ell}(\xi)$ depend only on the first $s(\ell+1)$ letters of $\xi$. Motivated by this, we define 
\begin{equation}\label{Xifinite}
    \Xi_{\ell}=\{0,1\}^{s(\ell+1)},
\end{equation} the set of finite words $(\xi_1,\xi_2,\dots,\xi_{s(\ell + 1)})$ of length $s(\ell+1)$. We also introduce the projection map
\begin{equation}\label{projeciton}
    \sigma_{\ell}:\Xi \longrightarrow \Xi_{\ell},\,\,\,\,\,\,\sigma_{\ell}(\xi_1,\xi_2,\dots)=(\xi_1,\dots,\xi_{s(\ell+1)}).
\end{equation}

Let $\xi\in\Xi_{\ell}$ and $\phi_{\xi} \in\Xi$ such that $\sigma_{\ell}(\phi_{\xi})=\xi$. For each $\ell \geq \ell_0$ and $\xi\in\Xi_{\ell}$, we define the events 
\begin{equation}\label{Bproj}
    \mathcal{B}_{\ell}^i(\xi) = \mathcal{B}_{\ell}^i(\phi_\xi),\,\,\,\,\,\, i = 1, \dots, h,
\end{equation} 
\begin{equation}\label{Bproj2}\mathcal{B}_{\ell}(\xi) = \mathcal{B}_{\ell}(\phi_\xi).
\end{equation}  
Also, write
$$\mathcal{D}_{\ell}=\bigcap_{\xi\in \Xi_{\ell}}\mathcal{B}_{\ell}(\xi).$$
For completeness, we also define (see~\eqref{Bell0}) $$\mathcal{D}_{\ell_0 - 1} = \bigcap_{\xi \in \Xi} \mathcal{B}_{\ell_0 -1}(\xi).$$ 

With this notation, $\mathcal{D}_{\ell}$ is the event that a certain random number of letters (bounded by $s(\ell + 1)$) of every $\xi\in\Xi_{{\ell}}$ is seen from the origin, and the corresponding renormalized processes reach the set $\overline{F}_{\ell+1}$ with at least half of its vertices. 

The occurrence of the event $\mathcal{D}_{\ell_0 -1 }$ is necessary for the $\xi$-exploration in the renormalized lattice $\mathcal{L}_n$ to start, for any $\xi \in \Xi$. 
The following lemma guarantees that $\mathcal{D}_{\ell_0 -1 }$ occurs with positive probability.

\begin{lemma}\label{lemmaverdaorigem}
    Assume $p_i > 0$ for all $i \geq m$, for some $m>0$. Then, for sufficiently large $n > n(C)$, 
    \[\P_{p,\varepsilon}^{4^{\ell_0} n}(\mathcal{D}_{\ell_0-1}) > 0, \textrm{ for  every } \ell_0 \in \N.  \]
\end{lemma}

Observe that the event $\mathcal{D}_{\ell_0 - 1}$ does not depend on the constant $L$. Indeed, since the explorations begin in the slice $T_{\ell_0}$, the occurrence of $\mathcal{D}_{\ell_0 - 1}$ is independent of the exploration processes. To prove Lemma~\ref{lemmaverdaorigem}, it suffices to show the existence of a configuration containing a $(\xi,\ell_0)$-seed for every $\xi \in \Xi$; such a configuration is constructed in Section~\ref{proofoflemmas}.
We are ready to prove Theorem \ref{teo1}.

\begin{proof}[Proof of Theorem \ref{teo1}] By monotonicity, it suffices to prove the theorem with $p_i = (i \log i)^{-1}$ for $i$ sufficiently large. Given $a > 0$, fix $L$ and $C$ as in Lemma~\ref{lemmaboundchances}. Pick $n$ sufficiently large (as a function of $C$ and $L$) so that Lemmas~\ref{lemmaentropy}, \ref{lemmaboundchances}, and \ref{lemmaverdaorigem} hold. Set $\ell_0 = \ell_0(C,n)$ as in Lemma~\ref{lemmaentropy}.

Recall that $W_0$ denotes the set of words seen from the origin; see \eqref{set_words}.  Under $\mathcal{D}_{\ell_0 - 1}$, if $W_{0}\neq \{0,1\}^{\N}$, then there exist $\ell \geq \ell_0$ and $\xi^*\in\{0,1\}^{s(\ell + 1)}$ such that all $h$ corresponding $\phi^*$-explorations (where $\sigma_{\ell}(\phi^{\ast}) = \xi^{\ast}$) in $T_{\ell}$ fail, that is,
\begin{equation*}
    \bigcap_{\ell \geq \ell_0 - 1} \mathcal{D}_{\ell} \subset  \big\{W_0 = \Xi \big\}. 
\end{equation*}
Therefore, taking the truncation constant $K = 4^{\ell_0}n > 2n$, we obtain
\begin{equation*}
    \P^{K}_{p,\varepsilon}\big( W_0 = \Xi \big) \geq  \left[ \prod_{\ell \geq \ell_0} \P^{2n}_{p,\varepsilon}\big( \mathcal{D}_{\ell} \big\vert \mathcal{D}_{\ell - 1} \big) \right] \P_{p,\varepsilon}^{4^{\ell_0}n}(\mathcal{D}_{\ell_0 - 1}) 
    .
\end{equation*}
By Lemma~\ref{lemmaverdaorigem}, it suffices to show that $\prod_{\ell \geq \ell_0}\P^{2n}_{p,\varepsilon}\big( \mathcal{D}_{\ell} \big\vert \mathcal{D}_{\ell - 1} \big)$ is positive, which is equivalent to showing that
\begin{equation*}
    \sum_{\ell \geq \ell_0} \left[ 1 - \P^{2n}_{p,\varepsilon}\big( \mathcal{D}_{\ell} \big\vert \mathcal{D}_{\ell - 1} \big) \right] =  \sum_{\ell \geq \ell_0} \P^{2n}_{p,\varepsilon}\big( \mathcal{D}_{\ell}^c \big\vert \mathcal{D}_{\ell - 1} \big)  < \infty
\end{equation*}
and $\P^{2n}_{p,\varepsilon}\big( \mathcal{D}_{\ell}^c \big\vert \mathcal{D}_{\ell - 1} \big) < 1$, for all $\ell \geq \ell_0$. 

We have 
\begin{align*}
\nonumber \sum_{\ell \geq \ell_0} \P^{2n}_{p,\eps}\big( \mathcal{D}_{\ell}^c \big\vert \mathcal{D}_{\ell - 1} \big)  &= \sum_{\ell \geq \ell_0}\P_{p,\eps}^{2n} \left(\bigcup_{\xi\in\Xi_{\ell}} \mathcal{B}_\ell^c(\xi) \bigg\vert\mathcal{D}_{\ell - 1}  \right) 
\\
\nonumber &\leq\sum_{\ell \geq \ell_0}\sum_{\xi\in\Xi_{\ell}} \P_{p,\eps}^{2n} \left(\bigcap_{i=1}^{h} \left(\mathcal{B}_\ell^i(\xi) \right)^c \bigg\vert  \mathcal{B}_{\ell-1}(\phi_\xi) \right)
\\
 &\leq \sum_{\ell \geq \ell_0} \sum_{\xi\in\Xi_{\ell}} \prod_{i=1}^{h} \P_{p,\eps}^{2n}\left( \left(\mathcal{B}_{\ell}^i(\xi)\right)^c \Bigg\vert \big[ \bigcap_{j< i} (\mathcal{B}_{\ell}^j(\xi))^c \big] \bigcap \mathcal{B}_{\ell-1}(\phi_\xi)  \right).
\end{align*}

Recalling $s(\ell+1) = 2\cdot 4^{\ell+1}h$ as defined in \eqref{sl}, Lemma~\ref{lemmaboundchances} yields 
\begin{align}\label{final}
     \sum_{\ell \geq \ell_0} \P^{2n}_{p,\eps}\big( \mathcal{D}_{\ell}^c \big\vert \mathcal{D}_{\ell - 1} \big)  & \leq \sum_{\ell \geq \ell_0} \sum_{\xi\in\Xi_{\ell}} a^{4^{\ell}h} 
    = \sum_{\ell \geq \ell_0} 2^{s(\ell+1)}a^{4^{\ell}h} \nonumber\\  &= \sum_{\ell \geq \ell_0} 2^{2\cdot4^{\ell+1}h}a^{4^{\ell}h}= \sum_{\ell \geq \ell_0} 4^{4^{\ell + 1}h} a^{4^{\ell}h} = \sum_{\ell \geq \ell_0}(4^4a)^{4^\ell h}.
\end{align}

Choosing $a < 4^{-4}$, we obtain $\sum_{\ell \geq \ell_0} \P^{2n}_{p,\eps}\big( \mathcal{D}_{\ell}^c \big\vert \mathcal{D}_{\ell - 1} \big)<\infty$. 
Moreover, for all $\ell\geq \ell_0$, we have 
\begin{equation}\label{menor1}\P^{2n}_{p,\varepsilon}\big( \mathcal{D}_{\ell}^c \big\vert \mathcal{D}_{\ell - 1} \big) \leq (4^4a)^{4^\ell h}  < 1.
\end{equation}
\end{proof}

\begin{remark}\label{weekversion}
    Without invoking Lemma~\ref{lemmaverdaorigem}, we obtain a weaker version of Theorem~\ref{teo1}: all words are seen from $\overline{F}_{\ell_0}$ with positive probability. Specifically, choosing $C$, $L$, $n$, and $\ell_0$ as in the proof, we have  
    $$\P_{p , \varepsilon}^{2n}\left(\bigcup_{v\in \overline{F}_{\ell_0}} W_v = \Xi\right) > 0.$$ 
     Indeed, setting $t(x) = 0$ for every $x \in S_{\x}^n$ with $\x \in \overline{F}_{\ell_0}$, we see that $\overline{F}_{\ell_0}$ forms a $(\xi, \ell_0)$-seed for all $\xi \in \Xi$. The conclusion then follows from \eqref{final} and \eqref{menor1}.
\end{remark}

\section{Proof of Lemma \ref{lemmaboundchances}}\label{proof_lemma2}
\subsection{The growth algorithm}\label{sectionG_proc}
Recall the growth algorithm introduced in Section \ref{proof_overview}. As introduced in Section \ref{renor}, the vertices of the renormalized lattice are one-dimensional boxes of size $n$ 
(see \eqref{renormalization}). Let $n\in\N$ and define $[n]=\{1,\dots,n\}$. Consider a random graph with vertex set $[n]$, where each edge $\{ i,j\}$ is independently open with probability $p_{|j-i|}$, and each vertex is independently assigned state $1$ with probability $p$ and state $0$ with probability $1-p$. This random graph can be seen as a configuration restricted to the box $S_{\x}^n$ of a renormalized vertex $\x$, where the algorithm will be executed. We denote its law by $\P_p^n$.

The parameters of the algorithm are the natural numbers $L$, $C$, $n$, and $h$, as in \eqref{h}. Given $\xi \in \Xi$, we define the growth algorithm in $[n]$ with respect to $\xi$, starting with a set of unavailable vertices $B_1$ and an initial vertex $v_1$ with $t(v_1) \in \N$; $v_1$ is identified with $u_0$ in \eqref{uzero}. Write $A_1=\{v_1\}\subset [n]$, $B_1\subset [n]$, and $Z_1=1$. For an integer $k\geq 1$, assume that we have defined $(A_k,B_k,Z_k)$ and $t(v)$ for all $v\in A_k$, where $A_k = \{v_1, \dots, v_{|A_k|}\}$ with
\begin{equation}\label{A}
    |A_k| = k + Z_k - 1.
\end{equation}
If $|A_k|\geq h$ or $Z_k=0$, we stop the algorithm. Otherwise, we define $(A_{k+1},B_{k+1},Z_{k+1})$ in two steps. Note that \eqref{A} implies $|A_k| \geq k$ whenever $Z_k \geq 1$, which ensures that $v_k \in A_k$ is well defined. 

We say that Step 1 is successful if there are distinct $u_1,u_2,\dots,u_L\in [n]\cap B_k^c$ such that $\{ v_k,u_i \}$ is open for all $i=1,\dots, L$. If Step 1 fails, we set $Z_{k+1}=Z_k-1$, $A_{k+1}=A_k$, $B_{k+1}=B_k$, and $t(v)$ is unaltered for every $v\in A_k$. If Step 1 succeeds, we set $B_{k+1}=B_k\cup\{u_1,\dots,u_L\}$ and proceed to Step 2.

We say that Step 2 is successful if there are distinct $y_1,y_2\in\{u_1,\dots,u_L\}$ such that $\eta(y_1)=\eta(y_2)=\xi_{t(v_k)+1}$; see Figure \ref{Fgrown}. The choice of two vertices $y_1$ and $y_2$ ensures that $(Z_k)_k$ is a process with positive drift; see comment after Remark \ref{remark_1}. In this case, we write $A_{k+1}=A_k\cup\{v_{|A_k|+1},v_{|A_k|+2}\}$, with $v_{|A_k|+1}=y_1$, $v_{|A_k|+2}=y_2$, $t(v_{|A_k|+1})=t(v_{|A_k|+2})=t(v_k)+1$, and $Z_{k+1}=Z_k+1$. If Step 2 fails, we write $Z_{k+1}=Z_k-1$, $A_{k+1}=A_k$, and $t(v)$ is unaltered for every $v\in A_k$. Note that \eqref{A} still holds for $k+1$ in either case. See Figure \ref{Fgrown} for an illustration of the growth algorithm.

\begin{figure}[ht]
    \centering
       \begin{tikzpicture}[scale = 0.85]
    \draw[rounded corners, thick, dashed] 
        (-7.6,-0.6) rectangle (7.7,0.6)
        node[midway,below,yshift=-23pt]{\textbf{$S_{\x}^n$}};
    \fill (-4,0) circle (2pt);
     \tznode(-6,0){ \textbf{$v_1$}}[b]
     \tzto[ bend left = 30] (-6,0)  [below,near end] (-4.0,0);
     \tzto[ bend left = 30] (-6,0)  [below,near end] (0,0);
      \fill (-2.0,0) circle (2pt);
     \tznode(-4.0,0){\textbf{$\xi_{t(v_1) +1}$}}[b]
      \tzto[ bend left = 30] (-4.0,0)  [below,near end] (-2,0);
         \tzto[ bend left = 30] (-4.0,0)  [below,near end] (2,0);
      \fill (0,0) circle (2pt);
     \tznode(-2,0){\textbf{$\xi_{t(v_1) +2}$}}[b]
     \fill (1,0) circle (2pt); 
     \tznode(0,0){\textbf{$\xi_{t(v_1) +1}$}}[b]
     \fill (2,0) circle (2pt);
     \tznode(2,0){\textbf{$\xi_{t(v_1)+2}$}}[b]
        \tzto[ bend left = 30] (0,0)  [below,near end] (4,0);     
        \tzto[ bend left = 30] (0,0)  [below,near end] (7,0);  
     \fill (4,0) circle (2pt);
     \tznode(4,0){\textbf{$\xi_{t(v_1) +2}$}}[b]
     \tznode(7,0){\textbf{$\xi_{t(v_1) +2}$}}[b]

        \foreach \x in {-7,-6,...,7}{
  \ifnum\x=3\relax
  \else
    \ifnum\x=6\relax
    \else
      \fill (\x,0) circle (2pt);
    \fi
  \fi
}
\fill (3,0) [mark=x, mark size=4pt] plot coordinates {(3,0)};

\fill (6,0) [mark=x, mark size=4pt] plot coordinates {(6,0)};

\end{tikzpicture}
    \caption{The growth algorithm in $S^n_{\x}$.}
    \label{Fgrown}
\end{figure}

Let $T'$ and $T''$ be the stopping times 
\begin{equation}\label{stopping}
T':=  \inf\{k:|A_k|\geq h\mbox{ or }Z_k=0\} \quad \textrm{and} \quad T'' :=
\inf\{k:|A_k|\geq h^2 \mbox{ or }Z_k=0\}. 
\end{equation}
When the algorithm is executed inside a box of the form $S_{\u}^n$ with $\u \notin F_{\ell}$, we say it is \textbf{successful} if $Z_{T'}>0$. When it is executed inside a box $S_{\u}^n$ with $\u \in F_{\ell}$,  we say it is \textbf{successful} if $Z_{T''}>0$. With a slight abuse of notation, we denote by $T$ either $T'$ or $T''$, depending on the respective case.

\begin{remark}\label{remark_1}
We have the following observations.
\begin{enumerate}
\item When the algorithm is run inside a box $S_{\u}^n$ starting from $v_1 \in S_{\u}^n$, with $v_1$ being $(\xi,t(v_1))$-special, then  $v_k$ is $(\xi,t(v_k))$-special for all $k = 1, \dots, T$. 

\item The algorithm is such that $t(v_k) \leq t(v_1) + k \leq t(v_1) + T$, for all $k = 1, \dots, T.$ A straightforward upper bound is $T'\leq h$ and $T''\leq h^2$. This observation will be useful in the proof of Lemma~\ref{lemmaentropy}.

\item Recall the definition of the set $B_{k+1}$ in Step 1 of the algorithm. As discussed in the previous section, the growth algorithm can potentially be executed up to $h$ times within each box. Therefore, control of the number of unavailable vertices in each execution is required. During each run of the algorithm, at most $L$ vertices are checked at each step. Hence, at the stopping time $T$, the number of unavailable vertices is bounded by
\begin{equation*}\label{bound}
|B_T|\leq |B_1|+ L(T-1). 
\end{equation*}

\item  When executing the algorithm at a vertex $v_k$, only edges incident to $v_k$ are examined. Since the vertices $v_k$ are distinct, each edge is checked at most once. Moreover, as $v_k$ becomes unavailable for subsequent explorations, each edge is checked in at most one potential execution of the algorithm.
\end{enumerate}
\end{remark}

\begin{lemma}\label{successful}
    Consider the growth algorithm in $[n]$ with  $(p_i)_{i \geq 1}$ as in Theorem~\ref{teo1}. Given $\delta > 0$ and $p \in (0,1)$,  the constant $L$ can be chosen such that, for every sufficiently large $n$, the following holds. For every $\xi \in \Xi$ and  every $B_1 \subset [n]$, with $\vert B_1 \vert \leq 3Lh^3$, 
 \begin{equation*}
     \P_p^n\big(Z_T > 0\big) \geq 1 - \delta.
 \end{equation*}
\end{lemma}
\begin{proof}  We begin with a brief overview of the argument. Taking \(L\) large increases the probability of success in Step 2, while taking \(n\) large improves the probability of success in Step 1. Thus, by choosing \(L\) and \(n\) sufficiently large, the marginals of \((Z_k)_{k=1}^T\) stochastically dominate a random walk on \(\mathbb Z_+\) whose probability of jumping to the right is arbitrarily close to one.

Once Step 1 succeeds, Step 2 holds with arbitrarily high probability (uniformly in $\xi$ and $t(v_k)$) by choosing $L$ sufficiently large, depending on $p$. Indeed, for large $L$, the probability that a binomial random variable $\textrm{Bin}(L;p^{\ast})$ exceeds $1$ is arbitrarily close to $1$, where $p^{\ast} = \min\{p, 1-p\}$.

We evaluate the growth algorithm at time $k$. We aim to show that, for any $\rho > 0$, there exist $L(\rho)$ and $n(C,L,\rho)$ sufficiently large such that
\begin{equation*}\label{marginals}
    \P_p^{n}(Z_{k+1}=j+1  \vert  Z_k=j, k < T )> 1 - \rho,
\end{equation*}
whenever $\P_p^{n}( Z_k=j, k < T  ) > 0$. 

On the event $\{ Z_k = j , k < T \}$, we can write $A_k = \{v_1, \dots, v_{k+j-1}\}$. Since $k < T \leq h^2$, we have $$|B_k| \leq |B_1| + (k-1)L < 4Lh^3.$$ Let $B_k^c$ be partitioned into $L$ disjoint sets $\Phi_1, \dots, \Phi_L$. For each $i = 1, \dots, L$, define $\Lambda_{k,i} = \{z \in [n] \cap \Phi_i : \{ v_k , z \} \textrm{ is open}\}$. Then, it holds that 
\begin{align*}
    \P_p^{n}\left( |\Lambda_{k,i}| \geq 1 \right) = \left[ 1-\prod_{z \in \Phi_i} \big( 1-p_{|v_k -z|} \big)\right]\nonumber
            \geq 1-\exp\left({- \sum_{ z \in \Phi_i} p_{|v_k -z|}}\right).\nonumber
\end{align*}

A lower bound for the probability of success in Step 1 is given by 
$$
     \prod_{i=1}^L \P_p^{n}\left( |\Lambda_{k,i}| \geq 1\right). 
$$
Now, for every sufficiently large $n$, we obtain $$\P_p^{n}\left( |\Lambda_{k,i}| \geq 1\right) \geq (1-\rho)^{1/L},\,\,\,\,\,\,\,\,\,  i = 1, \dots, L.$$ 

Indeed, on at least one side of $v_k$ (left or right), there are at least $n/2 - |B_k|$ vertices in $B_k^c$. Using the fact that $|B_k| < 4Lh^3$, the monotonicity of $(i \log i)^{-1}$, and recalling \eqref{h}, we obtain
\begin{equation}\label{divergence}
    \sum_{ z \in B^c_k} p_{|v_k - z|} \geq  \sum_{j=4Lh^3}^{n/2}(j\log j)^{-1}\xrightarrow[n \to \infty]{} \infty.
\end{equation}
Hence, for sufficiently large $n$, the set $B_k^c$ can be partitioned so that $\sum_{ z \in \Phi_i} p_{|v_k-z|}$ is arbitrarily large, for $i = 1, \dots, L$.

It follows that, taking $L$ large enough, for every sufficiently large $n$, the marginals of $(Z_k)_{k=1}^T$ stochastically dominate a simple random walk on $\Z_+$ whose probability of jumping to the right is larger than $1-\rho$. Hence, for every $\delta>0$, one can choose $\rho(\delta)$ sufficiently small so that the probability that this random walk never visits the origin exceeds $1-\delta$. Therefore, the lemma holds for all sufficiently large $n$. 

\end{proof}

\subsection{Stochastic dominance by oriented site percolation}\label{SecLemmas}

In this section, we prove Lemma~\ref{lemmaboundchances}. We begin by discussing a useful result for Bernoulli oriented site percolation on $\LL^2$. In this model, each vertex is independently open with probability $\gamma$ and closed with probability $1-\gamma$, $\gamma \in [0,1]$. Denote by $P_{\gamma}$ the corresponding law. 

For $S\subset \Z_+^2$, define
$$C_S\coloneq\{v\in \Z_+^2: S\rightarrow v\},$$ where $S\rightarrow v$ means that there exists an open oriented path from a vertex $u\in S$ to $v$.

For $\ell\in\Z_+$, recall the definitions of $F_{\ell}$ and $\overline{F}_{\ell}$ in \eqref{line} and \eqref{F2}. The following proposition is reminiscent of \cite{GLS} (see also \cite{NTT}). We omit its proof, as an equivalent statement appears in Lemma~1 of \cite{GLS}. 

\begin{proposition}\label{aux2}
Consider oriented Bernoulli site percolation on $\LL^2$ with parameter $\gamma$. For every $a > 0$, there exists $ \gamma(a) < 1$ such that for every $\gamma \geq \gamma(a)$ and every $\ell\in\Z_+$,
\begin{equation*}
    P_{\gamma}\left( |C_S \cap \overline{F}_{\ell+1}| < 4^{\ell}  \right) \leq a^{4^{\ell}},
\end{equation*}
for all $S \subset \overline{F}_{\ell}$ satisfying $|S| \geq 4^{\ell - 1}$.
\end{proposition}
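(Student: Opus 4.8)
The plan is to pass to space-time coordinates, where the self-similarity of the configuration across scales becomes transparent, and then to invoke the standard machinery for highly supercritical oriented percolation. Setting $t=z_1+z_2$ and $x=z_1-z_2$, the oriented lattice $\LL^2$ becomes a one-dimensional dynamics in which an open site at $(t,x)$ transmits to $(t+1,x\pm1)$, and each line $F_\ell$ of \eqref{line} becomes the time level $t_\ell=4^\ell-1$. Writing $N=4^\ell$, the source $S\subset F_{\ell,2}$ is carried to a subset of the spatial interval of half-width $N/2$ centered at $0$, with $|S|\ge N/4$; the target $F_{\ell+1,2}$ is carried to the $2N$ parity-correct sites of the interval $J$ of half-width $2N$ centered at $0$; and the elapsed time is $\Delta=3N$. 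After dividing all lengths by $N$, the triple (source interval, target interval, elapsed time) is independent of $\ell$, so a single threshold $\gamma(a)$ will work for every $\ell\ge\ell_0$, and it suffices to prove $P_\gamma(|C_S\cap J|<N)\le a^N$. The decisive geometric point is that, for $\gamma$ close to $1$, the edge speed $\kappa(\gamma)$ of supercritical oriented percolation satisfies $\kappa(\gamma)\to1$; since $\kappa(\gamma)\,\Delta\approx 3N$ exceeds the target half-width $2N$, the forward cone of the source contains $J$ with every target site sitting at cone-depth of order $N$.

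With the geometry fixed, I would reduce the problem to a single stochastic-domination statement: for $\gamma\ge\gamma(a)$, except on an event of probability at most $(a/2)^{N}$, the indicator field $\big(\mathbf{1}\{w\in C_S\}\big)_{w\in J}$ stochastically dominates a family of independent $\mathrm{Bernoulli}(1-\delta(\gamma))$ variables, where $\delta(\gamma)\to0$ as $\gamma\to1$. This single estimate bundles the two standard highly supercritical inputs. First, a large source cannot die out: starting from $|S|\ge N/4$ sites the extinction probability is at most $\beta(\gamma)^{N/4}$ with $\beta(\gamma)\to0$, the classical exponential-in-the-size bound for oriented percolation from a large set. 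Second, conditionally on survival, and because $J$ lies at depth of order $N$ strictly inside the cone, the infected field on $J$ couples with the upper invariant measure $\bar\nu_\gamma$ up to an error exponentially small in that depth; since $\bar\nu_\gamma$ dominates a product measure of density $1-\delta(\gamma)$ with $\delta(\gamma)\to0$, the claimed domination follows. This is precisely the content borrowed from Proposition~1 of \cite{GLS} (see also Section~6 of \cite{NTT}), which is why the details may be omitted.

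To conclude, apply a binomial tail bound to the dominating field. Among the $2N$ target sites, the number that fail to be open is stochastically at most $\mathrm{Bin}(2N,\delta(\gamma))$, and
\[
P\big(\mathrm{Bin}(2N,\delta(\gamma))\ge N\big)\le\binom{2N}{N}\delta(\gamma)^{N}\le\big(4\delta(\gamma)\big)^{N}\le (a/2)^{N},
\]
once $\gamma$ is large enough that $4\delta(\gamma)\le a/2$. Combining the two $(a/2)^N$ contributions gives $P_\gamma(|C_S\cap J|<N)\le a^N=a^{4^\ell}$, uniformly in $\ell\ge\ell_0$ and in $S$ with $|S|\ge4^{\ell-1}$. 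The main obstacle is exactly the passage from pointwise disconnection estimates to exponential-in-$4^\ell$ control of the \emph{count} of missed target sites: a single seed is useless here, since its extinction probability is a fixed positive constant, far larger than $a^{4^\ell}$, so one must genuinely exploit the large source to suppress extinction while simultaneously using the cone margin and the coupling with $\bar\nu_\gamma$ (equivalently, a Peierls/contour argument in which a deficit of $\Omega(4^\ell)$ target sites forces dual closed contours of total length $\Omega(4^\ell)$) to suppress holes. Both effects must be quantified at the same exponential scale, which is where the highly supercritical regime $\gamma\to1$ is essential.
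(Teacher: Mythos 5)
The first thing to note is that the paper offers no proof to compare against: Proposition 2 is stated and its proof is explicitly omitted, on the grounds that it ``closely follows the argument used for Proposition~1 in \cite{GLS}'' (see also Section~6 of \cite{NTT}). Against that backdrop, the parts of your sketch that you actually prove are correct. The space-time reduction is accurate: with $N=4^{\ell}$, the source is a set of at least $N/4$ sites in an interval of half-width $N/2$, the target $F_{\ell+1,2}$ consists of $2N$ parity-correct sites in an interval of half-width $2N$, the elapsed time is $3N$, and the exact scale invariance is precisely why one threshold $\gamma(a)$ serves all $\ell\geq\ell_0$. The tail estimate $P(\mathrm{Bin}(2N,\delta)\geq N)\leq\binom{2N}{N}\delta^{N}\leq(4\delta)^{N}$ and the combination of the two $(a/2)^{N}$ contributions are also fine.

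The genuine gap sits at your ``single stochastic-domination statement,'' which is where the entire difficulty lives, and your justification for it is circular. You assert that this statement ``is precisely the content borrowed from Proposition~1 of \cite{GLS}.'' But Proposition~1 of \cite{GLS} is, as the present paper uses it, essentially the proposition you are asked to prove --- a counting bound of the form $P_{\gamma}(|C_S\cap F_{\ell+1,2}|<4^{\ell})\leq a^{4^{\ell}}$ --- and not the strictly stronger claim that, off an event of probability $(a/2)^{4^{\ell}}$, the occupation field on the target line dominates an i.i.d.\ Bernoulli$(1-\delta(\gamma))$ field. Quoting it at that point reduces the proposition to a stronger version of itself. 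To make your route rigorous you would need to genuinely assemble: (i) the exponential-in-$|S|$ extinction bound with rate diverging as $\gamma\to1$ (a Peierls argument; note the extinction events of individual sources are positively correlated, so no product bound applies); (ii) the coupling of a surviving cluster with the all-occupied process inside the cone, again with rates improving as $\gamma\to1$; and (iii) domination of a product measure of density tending to $1$ by the upper invariant measure $\bar\nu_\gamma$ (a Liggett--Steif-type theorem, or a direct contour argument). Item (iii) cannot be waved through: the events that individual target sites are missed are all decreasing, hence positively correlated, so without a real product-domination (or BK/contour) input the factor $\binom{2N}{N}\delta^{N}$ has no justification --- it is exactly the input that licenses your binomial step. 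Your closing parenthetical, in which a deficit of $\Omega(4^{\ell})$ target sites forces closed dual contours of total length $\Omega(4^{\ell})$, is in fact the self-contained route, and the one closest to the argument of \cite{GLS} and Section~6 of \cite{NTT} that the paper intends; written out, it would replace, rather than supplement, the machinery in (i)--(iii).
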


In the oriented percolation process induced by the explorations on the renormalized lattice, we choose constants $C$ and $L$ such that, for sufficiently large $n$, the probability that a vertex is declared $i$-good is uniformly large, for every $\xi \in \Xi,$ independently of all steps in the current and previous explorations. Proposition~\ref{aux2} then ensures that, conditional on the event $\mathcal{B}_{\ell}(\xi)$ - which guarantees that at least half of the vertices in $\overline{F}_{\ell}$ are $i$-good for a fixed word $\xi$ and some $i$ whose $i$-th exploration succeeds in $T_{\ell-1}$ - it is highly unlikely that the fraction of good vertices in $\overline{F}_{\ell+1}$ drops below $1/2$.

Suppose that, at some point during the $i$-th $\xi$-exploration of vertices in $T_{\ell}$, we reach the renormalized vertex $\mathbf{x}$. Let $\mathcal{F}(\mathbf{x},i)$ denote the history of the process up to this step, that is, the $\sigma$-field containing all information about the renormalized vertices inspected so far, as well as the inspected bonds and vertices of the original graph, including all $(i-1)$ previous $\xi$-exploration processes.

\begin{lemma}\label{good_vert} Assume the hypothesis of Theorem~\ref{teo1}. Then, for all $p\in(0,1)$, $\eps>0$, and $\delta>0$, there exist constants $L(p,\eps,\delta)$ and $C(L)$ such that for every sufficiently large $n(L,C)$, 
\begin{equation*}\label{result}
\P_{p,\eps}^{2n}(\x\mbox{ is $i$-good } \big| \mathcal{F}(\x,i))>1-\delta. 
\end{equation*}
\end{lemma}

\begin{proof} A vertex $\x$ is declared $i$-good if it satisfies conditions H1', H2 (or V1', V2) and C3' from Section~\ref{renor}. To prove the lemma, we show that these three conditions are satisfied with arbitrarily large probability, independently of the past.

We begin by deriving an upper bound for the number of unavailable vertices $\mathcal{U}^i_{\x}$ in the box $S_{\x}^n$ (see \eqref{unchecked}). As noted in item 3 of Remark~\ref{remark_1}, each execution of the growth algorithm checks at most $L(T-1)$ new vertices. Besides, verifying Conditions H2 and V2 requires checking at most $L$ vertices per box. Moreover, if at some later stage the exploration needs to inspect the renormalized vertex $S^n_{\x+(0,1)}$ through a vertical connection from $S^n_\x$, then an additional $L$ vertices must be checked in $S^n_\x$. Consequently, each exploration checks at most $L(T-1)+2L = L(T+1)$ vertices in $S^n_\x$.


Furthermore, as discussed in Remark~\ref{remark_1}, we have $T' \leq h$ and $T'' \leq h^2$. In both cases, this yields
\begin{equation}\label{BR}
    |\mathcal{U}^i_{\x} | \leq (i-1) L(T+1) \leq (h-1) L(h^2+1) \leq 2Lh^3, \mbox{ for all } i=1,\dots,h,
\end{equation}
with the convention that ${\mathcal{U}}^1_{\x} = \emptyset$.

Recall the ordered pairs of sets $(C_m^i,D_m^i)_{m\geq0}$ defined in Section~\ref{renor}. Assume $(C_{m-1}^i,D_{m-1}^i)$ has been constructed for some $m \geq 1$, and that $G_{\mathbf{y}}^j$ and $R_{\mathbf{y}}^j$ are defined for all $\mathbf{y}\in C_{m-1}^i$ and for all $1\leq j\leq i-1$.

If the $i$-th $\xi$-exploration reaches $\x$ at time $m$, there must be some $\mathbf{y}\in C_{m-1}^i$ with $\x=\mathbf{y}+(1,0)$ or $\x=\mathbf{y}+(0,1)$. We begin by considering the case $\x = \mathbf{y} + (1,0)$. 

Since $\y \in C_{m-1}^i$, we have $|G_{\y}^i| \geq h$. Consider the set
\begin{equation*}
    \Lambda_{\x}^i = \left\{ x \in S^n_{\x}\cap (\mathcal{U}^i_{\x})^c : \{ y,x\} \textrm{ is open for some } y \in G_{\y}^i \right\}.
\end{equation*}
Given $x \in S^n_{\x}\cap (\mathcal{U}^i_{\x})^c$ and $y \in G_{\y}^i$, it is clear that $|y-x| \leq 2n$.  By hypothesis, $p_i \geq (i \log i)^{-1}$ for all $i$ sufficiently large, thus we can assume that the probability that the edge $\{ y,x \}$ is declared open is bounded below by $(2n \log(2n))^{-1}$, except for finitely many vertices $x$ near the boundary of $S^n_{\x}$, where it may be that $p_{|x-y|} < [|x-y|\log(|x-y|)]^{-1})$. Specifically, the relevant exception is when $|x-y|<i_0$, where $i_0$ is the threshold beyond which $p_i\geq (i\log i)^{-1}$ holds. Since there are at most $O(i_0)$ such pairs per $y$, and since this is uniform in $n$, for $h$ large at least $h-O(1)\geq h/2$ of the $h$ vertices in $G^i_\y$ give the bound $p_{|x-y|}\geq (2n\log (2n))^{-1}.$   Thus, for every $n$ large,
\begin{equation}\label{Eqc}
    \P^{2n}_{p,\eps} \left( x \in \Lambda_{\x}^i   \right) \geq 1 - \left[ 1 - \frac{1}{2n \log(2n)} \right]^{h/2}  \geq 
    1 - \left[ 1 - \frac{1}{2n \log(2n)} \right]^{C\log(n)/2} 
    \geq \frac{c_1(C)}{n}, 
\end{equation}
where the constant $c_1=c_1(C) > 0$ can be made arbitrarily large as a function of $C$.  

The bounds obtained in \eqref{BR} for $\mathcal{U}^i_{\x}$ imply that given $c_2 \in (0,1)$, we have $|S^n_{\x}\cap(\mathcal{U}^i_{\x})^c| \geq c_2n$ for every large $n$. Hence, choosing $C$ (and consequently $c_1$) large enough, we obtain
\begin{equation}\label{EqC1}
    \P^{2n}_{p,\eps} \left( |\Lambda_{\x}^i| \geq L \right) \geq P\left( \textrm{Bin}\left( c_2n , \frac{c_1}{n} \right) \geq L \right) \geq (1 - \delta)^{1/3}, 
\end{equation}
for every $n$ large.

A similar argument applies for the case $\x = \mathbf{y} + (0,1)$. Define
\begin{equation}\label{upsilon}
    \Upsilon_{\x}^i = \left\{ x \in S^n_{\x}\cap (\mathcal{U}^i_{\x})^c : [x-(0,1)] \notin \mathcal{U}^{i}_{\y} \right\} ,
\end{equation}
\begin{equation*}
    \Delta_{\x}^i = \left\{ x \in \Upsilon_{\x}^{i} : \{ y,x-(0,1)\} \textrm{ and }\{ x-(0,1),x\}   \textrm{ are open for some } y \in G_{\y}^i \right\}.
\end{equation*} 
Observe that $\vert \Upsilon^i_\x\vert \geq n/2$. We proceed as in \eqref{Eqc} and \eqref{EqC1}, with only a minor modification: a factor of $\eps$ is introduced to account for the requirement that the vertical edge $\{ x-(0,1),x\}$ be open. Then, for each fixed $L$, by choosing $C$ sufficiently large, we obtain $\P^{2n}_{p,\eps} \left( |\Delta_{\x}^i| \geq L \right) \geq  (1 - \delta)^{1/3}$,  for every $n$ large.

We have shown that Conditions H1' and V1'  hold with arbitrarily large probability, independently of the past. To complete the proof, it remains to show the same for H2, V2 and C3'. This follows because, assuming H1' (or V1') holds, the probability that H2 (or V2) also holds is larger than $[1- (\min\{p,1-p\})^{2}]^L \geq (1-\delta)^{1/3}$, provided $L$ is chosen sufficiently large. 
Finally, assuming that H1' and  H2 (or V1' and V2) hold,  it suffices to show that the probability that the growth algorithm succeeds is also arbitrarily high. This follows directly from Lemma~\ref{successful}, as explained below.

Conditioned on the past and that H1' and  H2 (or V1' and V2) hold, the growth algorithm in $\x$ starts with a set $B_1$ satisfying $|B_1| =  L + |\mathcal{U}^i_{\x}|$. Using the bound in \eqref{BR}, we obtain  $|B_1| \leq 3Lh^3$.  Consequently, Lemma~\ref{successful} ensures that, for a suitable choice of $L$ and for every sufficiently large $n$, C3' is satisfied with probability larger than $(1-\delta)^{1/3}$. This completes the proof.
\end{proof}

Lemma~\ref{good_vert} shows that each potential exploration stochastically dominates an oriented site percolation model with parameter $1-\delta$. Hence, by applying  Proposition~\ref{aux2}, we can prove Lemma~\ref{lemmaboundchances}. 

\begin{proof}[Proof of Lemma~\ref{lemmaboundchances} ]
Recall the definitions in \eqref{Xifinite} and \eqref{projeciton} and the events in \eqref{Bproj} and \eqref{Bproj2}. Given $\ell\in\Z_+$ and $\xi \in \Xi_{\ell}$, fix some $\phi_{\xi} \in \Xi$ such that $\sigma_{\ell}(\phi_\xi) = \xi$. Write $$\Psi= \big[ \cap_{j< i} (\mathcal{B}_{\ell}^j(\xi))^c \big] \cap \mathcal{B}_{\ell-1}(\phi_{\xi}) = \big[ \cap_{j< i} (\mathcal{B}_{\ell}^j(\phi_{\xi}))^c \big] \cap \mathcal{B}_{\ell-1}(\phi_{\xi}),$$ 
and define the probability measure
$$\mathbb{Q}^K_{p,\eps}(A)=\P^K_{p,\eps}(A|\Psi),$$ for any measurable set $A$.

 Recall that $\mathcal{U}^i_{\x} \coloneq \cup_{r=1}^{i-1}  R_{\x}^r $, $i = 1, \dots, h$, with $ \mathcal{U}_{\x}^1 = \emptyset$, and define the sets $$\Delta' = \left\{ \big( \mathcal{U}_{\x}^i \big)_{\x \in T_{\ell}} : \mathcal{U}^i_{\x} \subset S^n_{\x} ~\textrm{ and }~ |\mathcal{U}^i_{\x}| \leq Lh^2(h -1) \right\},$$ 
 $$\Delta=\{(S,X): |S|\geq 4^{\ell-1}, X \in \Delta'\}.$$ 

Denote $\mathcal{\widetilde{U}}^i = \big( \mathcal{U}^i_{\x}\big)_{\x \in T_{\ell} }$, and consider the random vector $Y = (\Theta_{\ell},\widetilde{\mathcal{U}}^i)$. Observe that $\mathbb{Q}_{p,\eps}^K(\Delta)=1$. Given $a > 0$, let $\gamma(a)$ be as in Proposition~\ref{aux2}. For $C$ and $L$ as obtained in Lemma~\ref{good_vert}, taking $K=2n$,  apply Lemma \ref{good_vert} with $1-\delta > \gamma(a)$ to obtain 
\begin{align*}
    \P_{p,\eps}^K\left( \left(\mathcal{B}_{\ell}^i(\xi)\right)^c \Bigg\vert  \bigcap_{j< i} (\mathcal{B}_{\ell}^j(\xi))^c  \cap \mathcal{B}_{\ell-1}(\phi_{\xi}) \right) 
   &= \int_{\Delta}  \mathbb{Q}^K_{p,\eps} \left( \left(\mathcal{B}_{\ell}^i(\phi_{\xi})\right)^c \bigg\vert Y = (S,X)  \right) dF_{Y} (S,X) \\
   &= \int_{\Delta} \mathbb{Q}^K_{p,\eps}  \left(  |\Gamma_{\ell}^i(\phi_{\xi})| < 4^{\ell} \bigg\vert Y = (S,X)  \right) dF_{Y} (S,X)  \\
   &\leq \int_{\Delta} P_{\gamma}\left( |C_{S} \cap \overline{F}_{\ell+1}| < 4^{\ell} \right)  dF_{Y} (S,X) \\
   &\leq a^{4^{\ell}}
\end{align*}
for every $n$ sufficiently large. This completes the proof of Lemma~\ref{lemmaboundchances}.
\end{proof}

\section{Proof of Lemma~\ref{lemmaverdaorigem}}\label{proofoflemmas}


Recall Conditions S1, S2 and S3' from Definition \ref{seed_def}. To prove Lemma~\ref{lemmaverdaorigem}, we construct a configuration that contains a $(\xi,\ell_0)$-seed for every $\xi \in \Xi$. Before that, let us introduce further notation. 

Given $\ell_0\in\N$, consider the finite set $$\Lambda = \{v \in \Z^2 : v \in S_{\u}^n \textrm{ for some } \u \in \Z_+^2 \textrm{ such that } ||\u|| < 4^{\ell_0} \},$$ and let $(\omega\times\eta)_{\Lambda}$ denote the restriction of the configuration $(\omega\times\eta)$ to $\Lambda$. We denote by ${\P^K_{p,\eps}}_{\vert \Lambda}$ the restriction of $\P^K_{p,\eps}$ to $\Lambda$.


Assume, with no loss of generality, that $n$ is even. For each $\u=(\u_1, \u_2) \in \mathbb{Z}^2_+$, we split the one-dimensional horizontal box $S_{\u}^{n}$ into the sets
\begin{equation}\label{S-}
    S_{\u}^{n,-} = \{ y = (y_1, y_2) : y_1 \in [n\u_1, n\u_1 + n/2 -1], y_2 = \u_2\},
\end{equation}
\begin{equation}\label{S+}
    S_{\u}^{n,+} =  \{ y = (y_1, y_2) : y_1 \in [n\u_1 + n/2, n(\u_1 + 1) -1], y_2 = \u_2\}.
\end{equation}

We say that $A^{-}(\xi)$ occurs if, for every $\v \in \overline{F}_{\ell_0}$ and every $v \in S_{\v}^{n,-}$, $v$ is $(\xi,t(v))$-special with $t(v) \leq 2 \cdot 4^{\ell_0}$. The definition of $A^{+}(\xi)$ is analogous. We are ready to prove Lemma \ref{lemmaverdaorigem}.

\begin{proof}[Proof of Lemma~\ref{lemmaverdaorigem}]
Given $\ell_0\in\N$, let $m>0$ be such that $p_i>0$ for all $i\geq m$, and assume $n \geq 2m+6$. We will construct a configuration $\omega^{\ast}_{\Lambda} = (\omega'\times\eta')_{\Lambda}$ with ${\P^K_{p,\varepsilon}}_{\vert \Lambda} \big( \omega^{\ast}_{\Lambda}\big) > 0$, such that, for every $\xi \in \Xi$, either $A^{-}(\xi)$ or $A^{+}(\xi)$ occurs in $\omega^{\ast}_{\Lambda}$.

Recall Definition \ref{seed_def}. The configuration $\omega^{\ast}_{\Lambda}$ is such that $\overline{F}_{\ell_0}$ is a $(\xi,\ell_0)$-seed for every $\xi \in \Xi$. Since $\left\vert \overline{F}_{\ell_0} \right\vert = 4^{\ell_0}$, Condition S1 is satisfied. Given $C$, for every $n$ large such that $n/2 \gg h^2 = \lceil C\log(n)\rceil^2$, the set $S_{\v}^{n,+}$ (respectively,  $S_{\v}^{n,-}$) can be partitioned into $h$ subsets of size at most $h$. Since every $v \in S_{\v}^{n,+}$ (respectively, $v \in S_{\v}^{n,-}$) is  $(\xi,t(v))$-special for some $t(v) \leq 2 \cdot 4^{\ell_0}$, conditions S2 and S3' are also satisfied.

We build $\omega^{\ast}_{\Lambda}$ as follows. First, let $\omega'$ be such that  $\omega'(\{ u, u+(k,0) \}) = 1$ for all $u,\, u+(k,0) \in \Lambda$ with $p_k > 0$, and open all vertical edges inside $\Lambda$, i.e., $\omega'(\{ u, u+(0,1) \}) = 1$.  For this reason, we must choose the truncation constant such that $K \geq 4^{\ell_0}n$. 

First, we use the renormalized column $\{S^n_{\bf{(0,y)}} : 0 \leq y < 3\cdot4^{\ell_0-1}\}$ to see all words of length $2^{2(3\cdot4^{\ell_0-1}-1)}$. Next, we use a long-range edge to connect this column directly to the set $\overline{F_{\ell_0}}$. More precisely,
for each $0 \leq y < 3\cdot4^{\ell_0 - 1} $, write
\begin{equation}\label{conf1}
\eta'(x,y) = 
    \begin{cases}
    0, ~  \textrm{ if } x = m, m+1  \textrm{ and $y$ is even,}\\
    1, ~ \textrm{ if } x = m+2, m+3  \textrm{ and $y$ is even,}\\
    0, ~ \textrm{ if } x = m, m+2 \textrm{ and $y$ is odd,} \\
    1, ~ \textrm{ if } x = m+1, m+3  \textrm{ and $y$ is odd.}
    \end{cases}
\end{equation}    

For each $1 \leq y < 3\cdot4^{\ell_0 - 1}$, write
\begin{equation}\label{conf2}
\eta'(x,y) = 
    \begin{cases}
    0, ~  \textrm{ if } x = 2m+3, 2m+4  \textrm{ and $y$ is odd,}\\
    1, ~ \textrm{ if } x = 2m+5, 2m+6  \textrm{ and $y$ is odd,}\\
    0, ~ \textrm{ if } x = 2m+3, 2m+5 \textrm{ and $y$ is even,} \\
    1, ~ \textrm{ if } x = 2m+4, 2m+6  \textrm{ and $y$ is even.}
    \end{cases}
\end{equation}    

For each $\u \in \overline{F}_{\ell_0}$, set
\begin{equation}\label{conf3}
    \eta'(u) = 
    \begin{cases}
    0, ~ \textrm{ if } u \in S^{n,-}_{\u}, \\
    1, ~ \textrm{ if } u \in S^{n,+}_{\u},
    \end{cases}
\end{equation}
where $S^{n,-}_{\u}$ and $S^{n,+}_{\u}$ are defined in \eqref{S-} and \eqref{S+}, respectively. The states $\eta'(\cdot)$ of the remaining vertices of $\Lambda$ are irrelevant.

\begin{figure}[ht]
\centering

\begin{tikzpicture}[scale=0.70, every node/.style={transform shape}]
\draw[thick, dashed, black] (1/2,1) rectangle (1.65,4.85);
\foreach \y in {1,2,...,16}
{
\foreach \x in {1,...,\numexpr17-\y\relax}
{   
    \pgfmathparse{\x}
    \let\t\pgfmathresult
    \pgfmathparse{\x + \y}
            \let\sum\pgfmathresult
             \ifdim\sum pt>16pt  {
             \ifdim\t pt>4pt {
             \ifdim\t pt<13pt{
             
            \node [rectangle,draw,minimum width=1cm,minimum height=0.1cm, gray, fill = gray] at (1.1*\x, 1.1 + 0.3*\y){};
            }
            \else{\node [rectangle,draw,minimum width=1cm,minimum height=0.1cm, gray] at (1.1*\x, 1.1 + 0.3*\y) {};}
            \fi
            }
            \else{\node [rectangle,draw,minimum width=1cm,minimum height=0.1cm, gray] at (1.1*\x, 1.1 + 0.3*\y) {};}
            \fi
            }
            \else {
            \node [rectangle,draw,minimum width=1cm,minimum height=0.1cm, gray] at (1.1*\x, 1.1 + 0.3*\y) {};}
            \fi

}    
}

            \node [gray] at (9.5, 4.5){\large$\overline{F}_{\ell_0}$};

 \begin{scope}[shift={(9, 6)}, scale=1.5]
        \draw[thick, dashed, black] (0,0) rectangle (6,3.8);
    

        \foreach \y in {0,1,...,11} {
           
               \node [rectangle,draw,minimum width=5.9cm,minimum height=0.1cm, gray ] at (3 cm, 0.3 + 0.3*\y) {};

        }
        
              \foreach \y in {0,2, 4, 6, 8, 10} {  
              \draw (0.5 +0*0.2,0.3 + 0.3*\y) circle (1pt);
              \draw (0.5 +1*0.2,0.3 + 0.3*\y) circle (1pt);
              \fill (0.5 +2*0.2,0.3 + 0.3*\y) circle (1pt); 
              \fill (0.5 +3*0.2,0.3 + 0.3*\y) circle (1pt);  
               }

               \foreach \y in {0,2, 4, 6, 8, 10} {  
              \draw (0.5 +0*0.2,0.3 + 0.3*\y +0.3) circle (1pt);
              \fill (0.5 +1*0.2,0.3 + 0.3*\y+0.3) circle (1pt); 
              \draw (0.5 +2*0.2,0.3 + 0.3*\y +0.3) circle (1pt);
              \fill (0.5 +3*0.2,0.3 + 0.3*\y+0.3) circle (1pt);  
               }

                \foreach \y in {1,3, 5, 7, 9, 11} {  
              \draw (1.5 +0*0.2,0.3 + 0.3*\y) circle (1pt);
              \draw (1.5 +1*0.2,0.3 + 0.3*\y) circle (1pt);
              \fill (1.5 +2*0.2,0.3 + 0.3*\y) circle (1pt); 
              \fill (1.5 +3*0.2,0.3 + 0.3*\y) circle (1pt);  
               }

               \foreach \y in {1,3, 5, 7, 9} {  
              \draw (1.5 +0*0.2,0.3 + 0.3*\y +0.3) circle (1pt);
              \fill (1.5 +1*0.2,0.3 + 0.3*\y+0.3) circle (1pt); 
              \draw (1.5 +2*0.2,0.3 + 0.3*\y +0.3) circle (1pt);
              
              \fill (1.5 +3*0.2,0.3 + 0.3*\y+0.3) circle (1pt);  
               }
        
            \draw (0.5 +1*0.2,0.3 ) -- (0.5 +1*0.2,0.3 +0.3) to[out=20, in=160] (1.5 +3*0.2,0.3 + 0.3*1) circle (1pt) --
            (1.5 +3*0.2,0.3 + 0.3*1) --(1.5 +3*0.2,0.3 + 0.3*2) 
            to[out=160, in=20] (0.5 +1*0.2,0.3 +0.3 *2) -- (0.5 +1*0.2,0.3 +0.3 *3) to[out=20, in=160] (1.5 +0*0.2,0.3 + 0.3*3) circle (1pt)
            -- (1.5 +0*0.2,0.3 + 0.3*4) to[out=160, in=20] (0.5 +3*0.2,0.3 +0.3 *4) -- (0.5 +3*0.2,0.3 +0.3 *5) to[out=20, in=160] (1.5 +0*0.2,0.3 + 0.3*5) -- (1.5 +0*0.2,0.3 + 0.3*6) to[out=160, in=20] (0.5 +2*0.2,0.3 +0.3 *6) -- (0.5 +2*0.2,0.3 +0.3 *7) to[out=20, in=160] (1.5 +0*0.2,0.3 + 0.3*7) -- (1.5 +0*0.2,0.3 + 0.3*8) to[out=160, in=20] (0.5 +1*0.2,0.3 +0.3 *8) -- (0.5 +1*0.2,0.3 +0.3 *9) to[out=20, in=160] (1.5 +0*0.2,0.3 + 0.3*9)
            -- (1.5 +0*0.2,0.3 + 0.3*10) to[out=160, in=20] (0.5 +3*0.2,0.3 +0.3 *10) -- (0.5 +3*0.2,0.3 +0.3 *11);

    \end{scope}

    \draw[->, line width=0.4mm] (9, 8) to[out=150, in=60] (1,4.85);

\end{tikzpicture}

\caption{An illustration of the path $((x_1,0), (x_2,1), (x_3,1), (x_4,2), \dots, (x_{22},11))$ with respect to the finite word $\phi = (0, 1,1,1,0,1,0,0,1,1,0,0,1,0,0, 0 ,0 ,1,0,0,1,1)$ seen from the origin. From each $(x_{2j},j)$ we can use all direct connections of the form $\{(x_{2j},j),v\}$ to vertices $v \in S_{(\v_1,j)}^{n,-}$ or $v \in S_{(\v_1,j)}^{n,+}$.}
\label{colagem}
\end{figure}

The configuration is built so that, in the columns $\{m, m + 1, m + 2, m + 3\}$,  the labels are 0011 at even heights and 0101 at odd heights, while in the columns $\{2m+3,\dots,2m+6\}$ the pattern is reversed: the labels are 0011 at odd heights and 0101 at even heights. Consequently, this four-column structure can display any binary sequence of length $2y$ while moving upward through alternating column blocks.

Figure~\ref{colagem} illustrates why \eqref{conf1} -- \eqref{conf3} are sufficient for the occurrence of either $A^{+}(\xi)$ or $A^{-}(\xi)$ for every $\xi$, as explained below.

For each $ 0 \leq y < 3\cdot4^{\ell_0-1} $ and every sequence $\phi \in \{0,1\}^{2y}$ of length $2y$, there exists a path $(x_1,0), (x_2,1), (x_3,1), (x_4,2), \dots, (x_{2y},y)$, where $x_j \in \{m,\dots,m+3\}$ for $j \equiv  1 \textrm{ or }2\mod 4$, and $x_j \in \{2m+3,\dots,2m+6\}$ for $j \equiv 3 \textrm{ or } 0\mod 4$, such that $\eta'(x_j,\lfloor j/2 \rfloor) = \phi_j$, $j = 1, \dots, 2y$.

Given $\xi \in \Xi$, let $ \v = (\v_1, \v_2) \in \overline{F}_{\ell_0}$. If $\xi_{2\v_2+1} = 0$, then $A^{-}(\xi)$ occurs. Indeed, for each $v \in S_{\v}^{n,-}$, there exists a path $u_1, \dots, u_{2\v_2}, v$ such that $u_j = (x_j,\lfloor j/2 \rfloor)$, with $\eta'(u_j) = \xi_j$ for $j = 1, \dots, 2\v_2$ and $\eta'(v) = \xi_{2\v_2+1} = 0$.  That is, every $v \in S_{\v}^{n,-} $ is $(\xi,2\v_2+ 1)$-special. If $\xi_{2\v_2+1} = 1$, we conclude that every $v \in S_{\v}^{n,+} $ is $(\xi,2\v_2+ 1)$-special. Finally, since $ \v = (\v_1, \v_2) \in \overline{F}_{\ell_0}$, we obtain $2\v_2+1 < 2\cdot4^{\ell_0}$.
\end{proof}

\section{Proof of Lemma~\ref{lemmaentropy} }\label{proofofentropy}

Recall the sequence $s(\ell) = 2\cdot 4^{\ell}h$, $\ell \in \N$. For $\xi \in \Xi$, Lemma~\ref{lemmaentropy} provides bounds on the entropy generated by the exploration processes that construct paths of $\xi$-special vertices. As noted in the proof of Theorem~\ref{teo1}, this control is essential to restrict the analysis to finite words and apply the union bound.

\begin{proof}[Proof of Lemma~\ref{lemmaentropy}]
    Recall that $\Gamma_{\ell}^i(\xi)$ denotes the set of $i$-good renormalized vertices in $\overline{F}_{\ell+1}$ obtained in the $i$-th $\xi$-exploration process in the slice $T_{\ell}$ (see \eqref{successful2} and \eqref{successful3}). 

    Given $\ell_0 \in \N$, assume that for some $\ell \geq \ell_0$ and some $i \in \{1, \dots, h\}$, the $i$-th $\xi$-exploration in $T_{\ell}$ is executed, inducing the set $\Gamma_{\ell}^i(\xi)$.  The $\xi$-explorations start from a $(\xi, \ell_0)$-seed, which we denote by $\Theta_{\ell_0}$.  Given $\z \in \Gamma_{\ell}^i(\xi)$, there exists an oriented path of good renormalized vertices $(\z_{4^{\ell_0}}, \dots, \z_{4^{\ell+1}} )$ with $\z_{4^{\ell_0}} \in \Theta_{\ell_0}$ and $\z_{4^{\ell+1}} = \z$, such that
 Conditions H1', H2 (or V1', V2), and C3' are satisfied with $\y = \z_{j-1}$ and $\x = \z_{j}$, for each $j = 4^{\ell_0}+1, \dots, 4^{\ell+1}$.

 Denoting by $x_j \in S_{\z_j}^n$ and $y_{j-1} = y(x_j) \in S_{\z_{j-1}}^n$ the corresponding vertices $x \in S_{\x}^n$ and $y(x) \in S_{\y}^n$ derived from Conditions H1', H2 (or V1' and V2) and C3', we have $t(x_j) \leq t(y_{j-1}) + 2$. By Remark~\ref{remark_1},
\[
t(y_{j}) <
\begin{cases}
t(x_{j}) + h^2, & \text{if } j \in \{4^{\ell_0+1},\dots,4^{\ell}\},\\[0.3em]
t(x_{j}) + h, & \text{otherwise}.
\end{cases}
\]
Therefore, we have the inequality
\[
t(y_{j}) <
\begin{cases}
t(y_{j-1}) + h^2 + 2, & \text{if } j-1 \in \{4^{\ell_0+1},\dots,4^{\ell}\},\\[0.3em]
t(y_{j-1}) + h + 2, & \text{otherwise}.
\end{cases}
\]

Following the notation above, each microscopic vertex $v \in G_{\z}^i = G_{\z_{4^{\ell+1}}}^i$ arises from the $i$-th execution of the growth algorithm in $S^n_{\z}$ starting at $x_{4^{\ell+1}}$. By Remark~\ref{remark_1} (item 2), $t(v) < t(x_{4^{\ell+1}}) + h^2$ for all $v \in G_{\z}^i$.

From Condition S3', we have $t(y_{4^{\ell_0}}) \leq 2\cdot4^{\ell_0}$. Combining this with the previous bounds, we obtain for every  $v \in G_{\z}^i$,
\begin{align*}
    t(v) < t(x_{4^{\ell+1}}) + h^2 &\leq t(y_{4^{\ell+1}-1}) + 2 + h^2 \\
    &< t(y_{4^{\ell_0}}) + (4^{\ell+1} - 4^{\ell_0})(h + 2) + (\ell - \ell_0)(h^2 + 2) \\
    &< 2\cdot 4^{\ell_0} + 4^{\ell+1}(h + 2) + \ell(h^2 + 2) \\
    &\leq 4^{\ell+1}h + [2\cdot 4^{\ell} + 2\cdot4^{\ell+1} + \ell(h^2 + 2)].
\end{align*}

Since $h = \lceil C\log(n)\rceil$, for each fixed $C$ and sufficiently large $n$, one can choose $\ell_0(C,n)$ such that 
\begin{equation*}
    t(v) < 2\cdot4^{\ell+1}h,  \quad \forall v \in G_{\z}^i,
\end{equation*}
for all $\ell \geq \ell_0$,  as claimed. 
\end{proof}

\section{Generalization}\label{remarkfinal}

The conclusion of Theorem \ref{teo1} remains true if condition \eqref{log} is relaxed. Specifically, we have the following theorem.
\begin{theorem}\label{teogen} Let $(q_n)_{n\in\N}$ be a non-increasing sequence such that $nq_{2n}\rightarrow 0$ as $n$ goes to infinity, and let $\beta(n)=(nq_{2n})^{-3}$. Assume that
 \begin{itemize}
     \item[(i)] for some $i_0 \in \N$,  $p_i \geq q_i$ for all $i \geq i_0$; 
     \item[(ii)]  for every $c > 0$, $$\sum_{i = \lceil c\beta(n) \rceil }^{n/2}  q_i \longrightarrow \infty,\quad\quad\mbox{ as $n\rightarrow \infty$}.$$
 \end{itemize}
  Then, for all $p\in(0,1)$ and $\varepsilon>0$, there exists $K=K(\varepsilon, p,(p_i))\in\N$ such that $$\P_{p,\eps}^K(W_0 =  \Xi )>0.$$
\end{theorem}

 We do not present a detailed proof of Theorem~\ref{teogen}, as it can be obtained by a straightforward adaptation of the arguments used in the proof of Theorem~\ref{teo1}.
 Indeed, compared to \eqref{h}, it suffices to set $h(n) = \lceil C(
 nq_{2n})^{-1}\rceil$ for a suitably large $C$ depending on $L$, where $L$ is chosen in terms of $p$ and $\varepsilon$, as described earlier. 
 In particular, if we want the asymptotic estimates used in the proof of Theorem~\ref{teo1} to remain valid, then the condition $h(n)\to\infty$ is necessary, which in turn implies that $nq_{2n}\to 0$.
 Additionally, the bound in \eqref{BR} implies that, in each run of the growth algorithm, $|B_1| \leq \tilde{c}(h(n))^3< c\beta(n)$, for some positive constants $\tilde{c}$ and $c$. Consequently, assumption (ii) above, similarly to \eqref{divergence}, yields a result analogous to Lemma~\ref{successful}. 
 
 Finally, this choice of $h(n)$ ensures properties analogous to those obtained in \eqref{Eqc} and \eqref{EqC1} (the key ingredients of Lemma~\ref{good_vert}). Particularly, the last term in \eqref{Eqc} becomes \[ 1 - \left[ 1-  {q_{2n}} \right]^\frac{h}{2} \geq  1 - \left[ 1-  {q_{2n}} \right]^\frac{C}{2nq_{2n}} \geq \frac{c_1}{n}.\]

As an example, let $\log_{(1)}(x) = \log x$ and $\log_{(m+1)}(x) = \log \log_{(m)}(x)$ for $m \in \N$. For any $m \in \N$ and $C > 0$, the conditions above are satisfied if  
 \begin{equation*}
     q_i = \frac{C}{i\log_{(1)}(i)\log_{(2)}(i)\dots\log_{(m)}(i)}.
 \end{equation*}
In this case, percolation on the truncated lattice was proved in \cite{MSV}.

\section{Proof of Theorem~\ref{teo2}}\label{sec_3}

In this section, we prove Theorem \ref{teo2}. The proof proceeds as follows. First, we show that our graph is isomorphic to a three-dimensional slab with one-dimensional long-range connections. Next, following \cite{GLS}, we construct a dynamical coupling between percolation on this slab and an independent nearest-neighbor oriented percolation process with parameter $\gamma\in(0,1)$, typically large. This coupling implies that all words are seen in the slab, and hence in the original graph.

\subsection{The isomorphism}\label{iso}

Let $\mathbb{S}_k$ denote a three-dimensional slab of thickness $k+1$ with one-dimensional long-range connections. More precisely, let $(\mathbf{e}_1, \mathbf{e}_2,\mathbf{e}_3)$ be the canonical basis of $\mathbb{R}^3$, and define the sets 
$$\E^{sl}_V = \cup_{n=1}^k \E^{sl}_{V,n}, ~\textrm{ where } ~ \E^{sl}_{V,n} =\{\{ u,u+n\mathbf{e}_3\} : u\in \Z^3\}, \quad 1\leq n\leq k,$$
$$\E^{sl}_{H,i}=\{\{ u,u+\mathbf{e}_i\} : u\in \Z^{3}\}, \quad i=1,2,$$
$$\E^{sl}= \E^{sl}_V\cup \E^{sl}_{H,1}\cup  \E^{sl}_{H,2}.$$
Write $\mathbb{V}=\Z^2\times\{0,\dots,k\}$ and denote $\mathbb{S}_k=(\mathbb{V},\E^{sl})$.

Let $G_K=\left(\Z^2,\E_{V}\cup\left(\bigcup_{i=1}^K\E_{H,i}\right)\right)$ be the $K$-truncation of the graph introduced in Section \ref{intro}, see \eqref{vertical} and \eqref{horizontal}. Consider the following subset of edges of $G_K$
$$\mathcal{W}(K) = \left\{\{(u_1,u_2),(v_1,v_2)\} \in \mathbb{Z}^2 \times \mathbb{Z}^2 : u_2 = v_2,  \lceil u_1/K \rceil = \lceil v_1/K \rceil \right\} \cup \mathcal{E}_{H,K} \cup \mathcal{E}_{V},
$$
and define the graph $F_K=\left(\Z^2 , \mathcal{W}(K) \right)$. 

We claim that the graphs $F_K$ and $\mathbb{S}_K$ are isomorphic (see also Section 3 of \cite{LSS}). To see this, define the function
$$
\varphi: \mathbb{Z} \rightarrow \mathbb{Z} \times \{0,1,\dots, K-1\}, \quad \textrm{where } ~~~\varphi(u) :=\left(\left \lceil \frac{u}{K}\right \rceil, u \mod K \right).
$$
Indeed, the function
$$
\Phi: \mathbb{Z} \times \mathbb{Z} \rightarrow \mathbb{Z}^2 \times \{0,1,\dots, K-1\},$$
defined by $\Phi(u, v) :=(\varphi(u), v),$
is a graph isomorphism between $F_K$ and $\mathbb{S}_K$; see Figure \ref{isomorphism}. We observe that edges in $\E_{H,K}$ are identified with edges in $\E^{sl}_{H,1}$, edges in $\E_{H,j}$ are identified with edges in $\E^{sl}_{V,j}$, $1\leq j\leq K-1$, and edges in $\E_{V}$ are identified with edges in $\E^{sl}_{H,2}$. 


\begin{figure}[ht]
\centering
\begin{tikzpicture}[scale = 0.9]
    
     \tznode(0,0){\textbf{$0$}}[b]
    \tzdot*(0,0)
    \tznode(1,0){\textbf{$1$}}[b]
    \tzdot*(1,0)
    \tznode(2,0){\textbf{$2$}}[b]
    \tzdot*(2,0)
   \tznode(3,0){\textbf{$3$}}[b]
    \tzdot*(3,0)
    \tznode(4,0){\textbf{$4$}}[b]
    \tzdot*(4,0)
    \tznode(5,0){\textbf{$5$}}[b]
    \tzdot*(5,0)
   \tznode(6,0){\textbf{$6$}}[b]
    \tzdot*(6,0)
   \tznode(7,0){\textbf{$7$}}[b]
    \tzdot*(7,0)
    \tznode(8,0){\textbf{$8$}}[b]
    \tzdot*(8,0)
    \tznode(9,0){\textbf{$9$}}[b]
    \tzdot*(9,0)
    \tznode(10,0){\textbf{$10$}}[b]
    \tzdot*(10,0)
    \tznode(11,0){\textbf{$11$}}[b]
    \tzdot*(11,0)
    \tznode(12,0){\textbf{$12$}}[b]
    \tzdot*(12,0)

    \tzto[gray, very thick] (0,0)  [below,near end] (2,0)
     \tzto[dashed, bend left = 25] (0,0)  [below,near end] (2,0)
     \tzto[bend left = 30, very thick] (0,0)  [below,near end] (3,0)
     \tzto[bend left = 30, very thick] (1,0)  [below,near end] (4,0)
     \tzto[bend left = 30, very thick] (2,0)  [below,near end] (5,0)

     \tzto[gray, very thick] (3+0,0)  [below,near end] (3+2,0)
     \tzto[dashed, bend left = 25] (3+0,0)  [below,near end] (3+2,0)
     \tzto[bend left = 30, very thick] (3+0,0)  [below,near end] (3+3,0)
     \tzto[bend left = 30, very thick] (3+1,0)  [below,near end] (3+4,0)
     \tzto[bend left = 30, very thick] (3+2,0)  [below,near end] (3+5,0)

   \tzto[gray, very thick] (6+0,0)  [below,near end] (6+2,0)
     \tzto[dashed, bend left = 25] (6+0,0)  [below,near end] (6+2,0)
     \tzto[bend left = 30, very thick] (6+0,0)  [below,near end] (6+3,0)
     \tzto[bend left = 30, very thick] (6+1,0)  [below,near end] (6+4,0)
     \tzto[bend left = 30, very thick] (6+2,0)  [below,near end] (6+5,0)

    \tzto[gray, very thick] (9+0,0)  [below,near end] (9+2,0)
     \tzto[dashed, bend left = 25] (9+0,0)  [below,near end] (9+2,0)
     \tzto[bend left = 30, very thick] (9+0,0)  [below,near end] (9+3,0)
     \tzto[bend left = 30, very thick] (9+1,0)  [below,near end] (9+4,0)
     
     \tzto[gray, very thick] (12+0,0)  [below,near end] (13,0)

\end{tikzpicture}
\vspace{0.8cm}

  \begin{tikzpicture}[scale = 1.1]
    \tzto[gray, very thick] (0,0)  [below,near end] (0,2);
    \tzto[gray, very thick] (1,0)  [below,near end] (1,2);
    \tzto[gray, very thick] (2,0)  [below,near end] (2,2);
    \tzto[gray, very thick] (3,0)  [below,near end] (3,2);
    \tzto[gray, very thick] (4,0)  [below,near end] (4,2);

    \tzto[dashed, bend left = 60] (0,0)  [below,near end] (0,2);
    \tzto[dashed, bend left = 60] (1,0)  [below,near end] (1,2);
    \tzto[dashed, bend left = 60] (2,0)  [below,near end] (2,2);
    \tzto[dashed, bend left = 60] (3,0)  [below,near end] (3,2);
    \tzto[dashed, bend left = 60] (4,0)  [below,near end] (4,2);

    \tzto[black, very thick] (0,0)  [below,near end] (4,0);
    \tzto[black, very thick] (0,1)  [below,near end] (4,1);
    \tzto[black, very thick] (0,2)  [below,near end] (4,2);

    \tznode(0,0){\textbf{$0$}}[b]
    \tzdot*(0,0)
    \tznode(0,1){\textbf{$1$}}[bl]
    \tzdot*(0,1)
    \tznode(0,2){\textbf{$2$}}[a]
    \tzdot*(0,2)
   \tznode(1,0){\textbf{$3$}}[b]
    \tzdot*(1,0)
    \tznode(1,1){\textbf{$4$}}[bl]
    \tzdot*(1,1)
    \tznode(1,2){\textbf{$5$}}[a]
    \tzdot*(1,2)
   \tznode(2,0){\textbf{$6$}}[b]
    \tzdot*(2,0)
   \tznode(2,1){\textbf{$7$}}[bl]
    \tzdot*(2,1)
    \tznode(2,2){\textbf{$8$}}[a]
    \tzdot*(2,2)
    \tznode(3,0){\textbf{$9$}}[b]
    \tzdot*(3,0)
    \tznode(3,1){\textbf{$10$}}[bl]
    \tzdot*(3,1)
    \tznode(3,2){\textbf{$11$}}[a]
    \tzdot*(3,2)
    \tznode(4,0){\textbf{$12$}}[b]
    \tzdot*(4,0)
    \tznode(4,1){\textbf{$13$}}[bl]
    \tzdot*(4,1)
    \tznode(4,2){\textbf{$14$}}[a]
    \tzdot*(4,2)

    \end{tikzpicture}    
\caption{The map $\Phi $ for $K=3$.} 
\label{isomorphism}
\end{figure}

\subsection{The coupling}\label{coupling}


Given $K\in\N$, let $\eps>0$ and $(p_{K,i})_{i\in\N}$ be as in \eqref{trun_seq}.  Consider the following percolation model on $\mathbb{S}_K=(\mathbb{V},\mathcal{E}^{sl})$. Edges in $\E_{H,1}^{sl}\cup \E_{H,2}^{sl}$ are independently open with probability $\eps$ and closed with probability $1-\eps$, while edges in $\E_{V,j}^{sl}$ are independently open with probability $p_j$ and closed with probability $1-p_j$.  Also, to each vertex $v\in\mathbb{V}$, we assign independently a random variable $\eta(v)$ taking the values 1 and 0 with probabilities $p$ and $1-p$, respectively. We refer to this model as the $SLAB(K,\varepsilon)$ model.

To show that all words are seen from the origin in the $SLAB(K,\varepsilon)$ model, we construct a coupling between the long-range process on the slab and an independent, highly supercritical nearest-neighbor oriented percolation process. This approach is similar to that of \cite{GLS}, where a long-range percolation process on $\LL^3$ is coupled with two-dimensional oriented percolation. In our case, however, the restriction to a slab requires a slight modification of the argument, which we present here for completeness.

Given a set $A\subset \Z^2_+$, write $\partial_e A$ for the external vertex boundary of $A$, defined by
$$\partial_e A=\{u\in A^c\colon  \exists v\in {A}\mbox{ such that } ||u-v||=1\}.$$
Fix $\xi\in\Xi$ and let $x_1,x_2,\dots$ be a fixed ordering of the vertices of $\LL^2_+$. We shall inductively construct a sequence $\{A_n,B_n\}_{n\geq 0}$ of ordered pairs of subsets of $\Z_+^2$ and a function $\psi:A_n\longrightarrow \Z_+.$ Write $(0,0)$ for the origin of $\LL^2_+$ and set $A_0=\{(0,0)\}$, $B_0=\emptyset$, and $\psi((0,0))=0$. 

Assume $\{A_n,B_n\}$ has been constructed for some $n\in\N$ and that $\psi(x)$ is known for all $x\in A_n$. If $\partial_e A_n\cap B_n^c=\emptyset$, we stop the construction and set $(A_{\ell},B_{\ell})=(A_n,B_n)$ for all $\ell\geq n$. If $\partial_e A_n\cap B_n^c\neq\emptyset$, let $x_n$ be the earliest vertex in the fixed ordering in $\partial_e A_n\cap B_n^c$, and define $y_n$ as the vertex in $A_n$ such that $x_n=y_n+(1,0)$ or $x_n=y_n+(0,1)$. First, assume that $x_n=y_n+(1,0)$. Fixing $N,M\in\N$, we distinguish between two cases:
\begin{enumerate}
\item[1.] $\psi(y_n)\leq N+M$

We say that $x_n$ is \textit{black} if, for some $i\in\{1,\dots, N\}$, the following conditions hold:
\begin{itemize}
\item $\eta(y_n,\psi(y_n)+i)=\xi_{2||y_n||+1}$,\, $(y_n,\psi(y_n)+i)\in \Z_+^2\times \Z_+,$
\item $\eta(x_n,\psi(y_n)+i)=\xi_{2||y_n||+2}$,\, $(x_n,\psi(y_n)+i)\in \Z_+^2\times \Z_+$,
\item $\{ (y_n,\psi(y_n)),(y_n,\psi(y_n)+i)\} \in \E_V^{sl}$, and $\{ (y_n,\psi(y_n)+i),(x_n,\psi(y_n)+i)\} \in \E_H^{sl}$ are open.
\end{itemize}
In this case, write $\psi(x_n)=\psi(y_n)+i$.
\item[2.] $\psi(y_n)> N+M$

We say $x_n$ is \textit{black} if, for some $i\in\{1,\dots, N\}$, the following conditions hold:
\begin{itemize}
\item $\eta(y_n,\psi(y_n)-i)=\xi_{2||y_n||+1}$, $(y_n,\psi(y_n)-i)\in \Z_+^2\times \Z_+,$
\item $\eta(x_n,\psi(y_n)-i)=\xi_{2||y_n||+2}$, $(x_n,\psi(y_n)-i)\in \Z_+^2\times \Z_+$,
\item $\{ (y_n,\psi(y_n)),(y_n,\psi(y_n)-i)\} \in \E_V$, and $\{ (y_n,\psi(y_n)-i),(x_n,\psi(y_n)-i)\} \in \E_H$ are open.
\end{itemize}
In this case, write $\psi(x_n)=\psi(y_n)-i$.
\end{enumerate}

Define
\begin{equation*}(A_{n+1},B_{n+1})=\left\{\begin{array}{ll}
(A_n\cup\{x_n\},B_n)&\mbox{if}\quad x_n\mbox{ is black},\\
 (A_n,B_n\cup\{x_n\})&\mbox{otherwise}.
\end{array}\right.
\end{equation*}

To handle the case $x_n=y_n+(0,1)$, we proceed analogously with the only difference that, when declaring $x_n$ black, $i$ is picked in the set $\{N+1,\dots,N+M\}$. This distinction is useful to avoid dependence issues. Writing $$A(\xi)=\bigcup_{n\in\N}A_n,$$ it follows that if $|A(\xi)|=\infty$, then $\xi$ is seen in the $2(N+M)$-truncated model. Note that
\begin{equation*}\P_{p,\eps}^{2N+2M}(x_n\mbox{ is not black})\leq\left\{\begin{array}{ll}
\prod_{i=1}^N[1-\varepsilon p_i(\min\{p,1-p\})^2]&\mbox{if}\quad x_n=y_n+(1,0),\\
 \prod_{i=N+1}^{N+M}[1- \eps p_i(\min\{p,1-p\})^2]&\mbox{if}\quad x_n=y_n+(0,1).
\end{array}\right.
\end{equation*}
In any case, if $\limsup p_i>0$, then the sum of the $p_i$ diverges and we get
\begin{equation}\label{black}
\P_{p,\eps}^{2N+2M}(x_n\mbox{ is black})\longrightarrow 1,\mbox{ when $N,M\rightarrow \infty$},
\end{equation}
for all $n\in\N$. Using \eqref{black}, Lemmas 1 and 2 in \cite{GLS} follow in a rather analogous way, allowing us to prove that, for all $p\in(0,1)$, $\varepsilon>0$, and $\alpha>0$, there exists $N=N(p,\varepsilon,\alpha)$ and $M=M(p,\varepsilon,\alpha)$ such that all words are seen simultaneously from the origin in the $SLAB(2N+2M,\varepsilon)$ model with probability larger than $1-\alpha$.

\begin{proof}[Proof of Theorem \ref{teo2}.]
Fix $\varepsilon$, $\alpha$, and $p$. By hypothesis, there exists $\delta>0$ such that $\limsup p_i>\delta$. Recall from Section \ref{iso} that edges in $\E_{H,K}$ are identified with those in $\E_{H,1}^{sl}$. Hence, by the discussion in Sections \ref{iso} and \ref{coupling}, there exists $K_1=K_1(p,\delta,\alpha)$ such that all words are seen from the origin in the $SLAB(K_1,\delta\wedge\varepsilon)$ model with probability larger than $1-\alpha$.
Next, pick $K_2\geq K_1$ such that $p_{K_2}>\delta$, noting that this choice is possible since $\limsup p_i>\delta$. Clearly, since $\mathbb{S}_{K_1}$ is a subgraph of $\mathbb{S}_{K_2}$, all words are seen from the origin in the $SLAB(K_2,\delta\wedge\varepsilon)$ model with probability larger than $1-\alpha$. The result follows by observing that $\mathbb{S}_{K_2}$ is isomorphic to $F_{K_2}$, which is itself a subgraph of $G_{K_2}$.
\end{proof}

\section*{Acknowledgements} The research of Pablo Gomes was partially supported by FAPESP, grant 2023/13453-5 and grant 2025/27064-6. Ot\'avio Lima was partially supported by CAPES and FAPESP, grant 2025/04550-2. Roger Silva was partially supported by FAPEMIG, grant APQ-06547-24. The authors are grateful to the referee for a careful evaluation and helpful comments, which contributed to improving this manuscript.




\end{document}